\documentclass{article}
\usepackage{amsmath,amssymb,amsthm}
\usepackage{graphicx}
\usepackage{float}
\def\Oh{\mathcal{O}}
\def\eps{\varepsilon}
\def\be{\begin{equation}}
\def\ee{\end{equation}}
\def\dss{\displaystyle}

\newtheorem{thm}{Theorem}
\newtheorem{lemma}{Lemma}
\newtheorem{remark}{Remark}

\title{Improving the Accuracy of the Exponentially Fitted Scheme on Piecewise Uniform Meshes}
\author{Relja Vulanovi\'c\footnote{Professor Emeritus, rvulanov@kent.edu}\\
Department of Mathematical Sciences\\ 
Kent State University at Stark\\ 
6000 Frank Ave NW\\ 
North Canton, OH 44720, USA 
}
\begin{document}
\maketitle

\begin{abstract}
\noindent
A linear one-dimensional singularly perturbed convection-diffusion problem is solved numerically after its solution is decomposed as $u_0+w$, where
$u_0$, the corresponding reduced solution, is treated as a function known exactly or approximately. The component $w$ is then calculated 
using the exponentially fitted Allen-Southwell-Il'in (ASI) scheme on the Shishkin mesh and its asymptotic version. We prove that this numerical method is 
highly accurate, with errors that diminish when the discretization parameter increases, and, in some cases, even when the perturbation parameter decreases. 
This is a theoretical confirmation of earlier numerical results showing that the ASI scheme outperforms the general class of Samarskii-type schemes to which 
it belongs. Even higher accuracy is proved when $u_0$ is linear, in which case, the decomposition is not needed. New numerical experiments are provided to illustrate all this.
\\[5mm]
Keywords: singular perturbation, convection-diffusion, solution decomposition, Shishkin mesh, asymptotic mesh,
exponential fitting, Samarskii-type schemes
\end{abstract}

\section{Introduction}
\label{sec:intro}

We consider the problem of finding a $C^2[0,1]$-solution to the following linear one-dimensional singularly perturbed convection-diffusion problem,
\begin{equation}\label{1DCD}
Lu:= -\varepsilon u'' -b(x)u' + c(x)u=f(x),\ x\in(0,1),\
u(0)=u(1)=0.
\end{equation}
The parameter $\eps$ is a perturbation parameter, $0< \eps \ll 1$, and 
the functions $b$, $c$, and $f$ are sufficiently smooth. Moreover, we assume that
\be\label{bc_cond}
b(x)> \beta>0, \ \ c(x)\ge 0, \ \ x\in I:=[0,1].
\ee
Then, \eqref{1DCD} has a unique solution $u$ that in general shows an exponential layer near the boundary at $x=0$, \cite{KT78}.

We are interested in numerically solving the problem \eqref{1DCD}. This interest is motivated by the fact that singular 
perturbation problems arise in various applications, see \cite{ChH, FHMOS, RST08} for instance, and the problem 
\eqref{1DCD} is one of the most frequently analyzed models.

Singularly perturbed problems require numerical methods that produce approximate solutions with errors that decrease 
when the discretization parameter $N$ grows, but do not increase when $\eps\to 0$. This can be achieved on a uniform 
mesh by adjusting the discretization scheme to the behavior of the solution $u$. For the problem \eqref{1DCD}, a 
scheme of this kind is the exponentially fitted scheme due to Allen and Southwell \cite{AllSouth}, and Il'in \cite{Il69}. We 
refer to this scheme as the {\em Allen-Southwell-Il'in}\/ ({\em ASI}) scheme. 
Early detailed analyses of the ASI scheme can be found in \cite{Il69, KT78, DMS80}. 
They demonstrate that the ASI scheme is $\eps$-uniformly first-order accurate. 
The upwind scheme and the Samarskii scheme, which are also analyzed in \cite{KT78}, do not have this property, 
although they are, respectively, first- and second-order accurate when $\eps$ is fixed. A general class of 
schemes, including the ASI scheme and the Samarskii scheme, is considered in \cite{Tob83}. We call this class
the {\em Samarskii-type}\/ schemes. The Samarskii-type schemes that are not exponentially fitted can only achieve
first-order $\eps$-uniform accuracy away from the layer, that is, in an interval of the form $[\delta,1]$ with a fixed 
positive $\delta$, \cite{Tob83}.

Another way to get $\eps$-uniform numerical results is to use special nonuniform discretization meshes dense in the 
layer, such as the Bakhvalov \cite{Bakh69} and Shishkin \cite{Sh88} meshes. A natural question is whether the 
advantage the ASI scheme has over other Samarskii-type schemes on the uniform mesh carries over to the
layer-adapted meshes. The results in \cite{Linss02} and \cite{VNfc} indicate that this is not the case on the
Shishkin mesh. Based on our numerical experiments, the same can be said about the Bakhvalov mesh. 
On the Shishkin mesh, all these schemes have $\eps$-uniform 
accuracy of order $\Oh\left(N^{-1}\right)$, \cite{Linss02}. This is an improvement over other first-order schemes, 
represented by the upwind scheme, whose error behaves like $\Oh\left(N^{-1}\ln N \right)$ uniformly in $\eps$. 
The same result as in \cite{Linss02} is reiterated in \cite{VNfc}, but it is also demonstrated there that the Samarskii-type 
schemes are almost-second-order accurate, with an error of $\Oh\left( N^{-2} (\ln N)^2\right)$, on the layer 
component of the solution. Based on this property, the accuracy is improved to almost-second-order using the solution 
decomposition $u=u_0+w$, where $u_0$ is the solution of the reduced problem corresponding to \eqref{1DCD}.

The numerical results in \cite{VNfc} demonstrate that the above-described use of $u_0$ enables the ASI scheme to 
regain its superiority over other Samarskii-type schemes. Our goal is to analyze this in the present paper. In addition to 
the standard Shishkin mesh considered in \cite{VNfc}, we also discuss the closely related asymptotic mesh ($A$-mesh) 
\cite[p.~405]{RST08}. In the original Shishkin mesh, the transition point between the fine part of the mesh in
the layer and the coarse part outside the layer is of the form $\Oh(\eps \ln N)$. The $A$-mesh we consider has 
$\Oh(\eps|\ln\eps|)$ for the transition point. Using two proof techniques, viz., the barrier-function 
technique and the preconditioning technique, we prove that the ASI scheme is highly accurate, with the error that in 
some cases decreases when $\eps$ does. 

This paper is a continuation not only of \cite{VNfc}, but also of \cite{VN21}, where a method similar to the present one
is analyzed on the $A$-mesh. However, the theoretical results derived here cannot be directly deduced 
from either \cite{VNfc} or \cite{VN21}. In \cite{VNfc}, only the general class of Samarskii-type schemes is discussed, and 
the specific features of the ASI scheme are not considered.  As for \cite{VN21}, it only deals with the simpler 
upwind scheme, but the method is based on the decomposition $u=u_0 + v + y$, where $v$ is the exponential-layer 
function. The present approach is simpler and produces more accurate results.

In the next section, we introduce some preliminaries: the solution decomposition and its analysis, the joint definition of the two piecewise uniform meshes, and the family of 
the Samarskii-type schemes. The numerical method is described in Section \ref{sec:nm}. This is followed by the discussion of the two proof techniques in
Section \ref{sec:bar-pre}. Section \ref{sec:trunc} deals with estimates of the truncation error. Then, the error estimates for the numerical method are derived in
Section \ref{sec:main}. Two further issues are discussed in the subsections of Section \ref{sec:disc}: the situation when the reduced solution is found numerically,
and a special case when the reduced solution is linear, which is when the decomposition $u=u_0+w$ is not needed. Numerical results are provided in
Section \ref{sec:numer} and some concluding remarks in Section \ref{sec:concl}.

 \section{Preliminaries}
 \label{sec:prel}
 
Throughout the paper, $C$ denotes a generic positive constant independent of $\eps$ and $N$. Some specific constants of this kind are sub- or superscripted.
We use the continuous maximum norm $\| \cdot\|_\Omega$, where $\Omega$ is a closed subset of $I$.
 
We next describe a decomposition of the solution $u$ to the problem \eqref{1DCD}, which is vital for our numerical method. 
Then, we introduce the two kinds of piecewise-uniform discretization meshes and Samarskii-type schemes. 

\subsection{The Solution Decomposition and Derivative Estimates}
\label{sec:decomp}

We now specify the smoothness requirement for the problem \eqref{1DCD}: $b, c, f\in C^4(I)$. Then the solution $u$ of \eqref{1DCD} belongs 
to $C^6(I)$ and the corresponding reduced solution $u_0$ is in $C^5(I)$. 
The reduced solution solves the reduced problem, which is a terminal-value problem,
\be
\label{red}
-b(x)u' + c(x)u = f(x), \ \ x\in (0,1), \ \ u(1)=0.
\ee
The following decomposition of the solution $u$ is analyzed in \cite{VN21}:
\[
u = u_0 + w, \ \ w = y + v, \ \  v(x) = -\frac{\eps u'(0)}{b(0)}e^{-b(0)x/\eps}.
\]
This is a modification of the Kellogg-Tsan decomposition \cite{KT78}, which is $u = z + v$. It is proved in \cite{VN21} that
\be
\label{dery}
	|y^{(j)}(x)| \le C\left( \eps + \eps^{1-j}e^{-\beta x/\eps}\right), \ \ j=0,1,2,3,4, \ \ x\in [0,1].
\ee
Since $\eps |u'(0)| \le C$, we have
\[
	|v^{(j)}(x)| \le C \eps^{-j}e^{-b(0) x/\eps}, \ \ j=0,1,2,3,4, \ \ x\in [0,1].
\]

The decomposition  $u = u_0 + w$ is the basis of the numerical method proposed in \cite{VNfc} for 
solving the problem \eqref{1DCD}. The problem is transformed into
\be
\label{wprob}
L w(x) = \eps u_0''(x), \ \ x\in (0,1), \ \ w(0) = -u_0(0), \ \ w(1)=0,
\ee
and then, \eqref{wprob} is discretized using the general Samarskii-type scheme on the standard Shishkin mesh, which is described below along with its variation.

\subsection{The Meshes} 
\label{sec:smesh}

The discretization mesh for the problem \eqref{wprob} consists of the points $x_0=0<x_1 < x_2< \cdots < x_{N-1} < x_N =1$. 
Let $h_i=x_i-x_{i-1}$ for $i=1,2,\ldots, N$ and $\hbar_i = (h_i+h_{i+1})/2$ for $i=1,2,\ldots, N-1$.

We consider two kinds of piecewise-uniform meshes; their joint description follows.
Let either $\lambda=\eps^{-1}$ or $\lambda=N$. Let also $Q$ be a fixed rational number in $(0,1)$, such that $J=QN$ is an integer, and let 
$a$ be a positive mesh parameter to be determined. Then we define 
\[
\xi = \min\left\{ Q,\ \frac{a\eps}{\beta} \ln\lambda \right\}
\]
as the transition point between the fine and coarse parts of the mesh. In what follows, we only consider the case $\xi = (a\eps/\beta)\ln\lambda$. 
Due to the smallness of $\eps$, this is not a restriction when $\lambda = \eps^{-1}$. On the other hand, if $\lambda= N$, then $\xi=Q$ 
means that $N$ is unrealistically large (the analysis is simpler in that case anyway). 

The interval $[0,\xi]$ is divided into $J$ uniform subintervals. This produces the fine mesh in the layer. The coarse part of the mesh contains the points obtained 
when the interval $[\xi,1]$ is divided into $N-J$ uniform subintervals. Therefore, the mesh steps are
\[
h_i = \left\{\begin{array}{lc}
h=h(\lambda) := \xi/J \le C \eps N^{-1}|\ln\lambda|, & 1 \le i \le J, \\
H=H(\lambda) := (1-\xi)/(N-J) \le CN^{-1}, & J+1 \le i \le N.
\end{array}\right.
\]
The mesh points
\[ 
x_i = \left\{\begin{array}{lc}
ih, & 0\le i\le J, \\
\xi + (i-J)H, & J+1 \le i \le N.
\end{array}\right.
\]
constitute the mesh, which we denote by $S_\lambda$. The mesh $S_N$ is the standard Shishkin mesh and
$S_{1/\eps}$ is an $A$-mesh.

\subsection{The Samarskii-Type Schemes}
\label{sec:Sam}

Let $U^N$, $W^N$, etc., denote mesh functions on $S_\lambda$ and let $g^N$ be the discretization of
any $C(I)$-function $g$ on $S_\lambda$. We use the discrete maximum norm, $\| U^N\| = \max_{0\le i\le N} |U^N_i|$. 
We also use the following standard finite-difference operators on $S_\lambda\setminus\{0,1\}$:
\[
D^+ U^N_i = \frac{U^N_{i+1}-U^N_i}{h_{i+1}}, \ \ D^- U^N_i = \frac{U^N_{i}-U^N_{i-1}}{h_{i}},
\]
\[
D''U^N_i=\frac{1}{\hbar_i}\left( D^+U^N_i - D^-U^N_{i} \right).
\]

The Samarskii-type schemes that discretize the continuous operator $L$ can be represented as modifications of the upwind scheme, \cite{VNfc},
\[
L^N U^N_i  :=  -\eps\sigma(\rho_i) D'' U^N_i -b(x_i) D^+ U^N_i + c(x_i) U^N_i,
\]
where
\[ 
\rho_i = \frac{b(x_i) h_{i+1}}{2\eps}
\]
and for the function $\sigma$, defined on $[0,\infty)$, it holds that
\be
\label{sigma1+}
0\le \sigma(\rho)\le \tilde C, \ \ \rho\in[0,\infty),
\ee
and
\be
\label{sigma2}
|\sigma(\rho) + \rho - 1| \le \bar C\min\left\{ \rho, \rho^2\right\}, \ \ \rho\in[0,\infty).
\ee
These conditions are also considered in \cite{Tob83}. Because of $\sigma\ge 0$, the operator $L^N$ satisfies the discrete maximum principle.

Some notable members of the class of Samarskii-type schemes are:
\begin{itemize}
\item the Samarskii scheme \cite{Sa} with 
\[ \sigma(\rho) = \sigma_{\text Sam}(\rho) = \frac{1}{1+\rho}, \]
\item the Runchal-Spalding \cite{Runch, Spald} with 
\[ \sigma(\rho) = \max\{ 1-\rho, 0\}, \]
\item the Allen-Southwell-Il'in (ASI) scheme \cite{AllSouth, Il69} with 
\[ \sigma(\rho) = \sigma_{\text{ASI}} := \rho(\coth\rho -1) = \frac{2\rho}{e^{2\rho} -1}, \]
where $\sigma_{\text{ASI}}(0)$ is defined as 1, the value of its limit when $\rho\to 0^+$.
\end{itemize}

For a more complete list of specific Samarskii-type schemes and their generalizations that satisfy the conditions \eqref{sigma1+} and \eqref{sigma2}, see \cite{VNfc}. 
The ASI scheme is not discussed 
there in detail, so we do this briefly here using $\sigma_{\text{ASI}} = 2\rho/\left( e^{2\rho}-1\right)$. The inequalities in \eqref{sigma1+} are trivially satisfied, the right 
one with $\tilde C =1$. As for \eqref{sigma2}, it holds with $\bar C=1$. We first observe that 
\be
\label{ASI}
\sigma_{\text{ASI}}(\rho) + \rho - 1 \ge 0
\ee
This is immediately true for $\rho\ge 1$. When $0\le \rho <1$, it can be proved that $e^{2\rho} \le (1+\rho)/(1-\rho)$, which is
equivalent to \eqref{ASI}. Therefore, we need to prove that
\[
\frac{2\rho}{e^{2\rho} - 1} \le 1 - \rho + \rho^s, \ \  \rho\ge 0, \ \  s =1,2.
\]
This follows from
\[
\frac{2\rho}{e^{2\rho} - 1} \le \frac{2\rho}{2\rho + 2\rho^2} = \sigma_{\text{Sam}}(\rho)
\]
since it can be directly verified that $\sigma_{\text{Sam}}(\rho) \le 1 - \rho + \rho^s$, $\rho\ge 0$, $s =1,2$.

Note that we get the upwind scheme when $\sigma(\rho)\equiv 1$, but the condition \eqref{sigma2} is not satisfied. The central scheme, 
obtained by setting $\sigma(\rho)=1-\rho$, violates \eqref{sigma1+}.

Let us mention that $L^N$ with $\sigma=\sigma_{\text{ASI}}$ is not the only possible nonuniform-mesh generalization 
of the original ASI scheme. The original scheme is constructed on a uniform mesh (which we denote by $I^N$), so that 
it is exact for the component $v$ of the solution when $b$ is constant. Since $-\eps v'' -b(x)v'=0$ on $(0,1)$ for 
$b(x) \equiv b = \text{const.}$, the fitting coefficient $\sigma_i$ is determined so that
\be
\label{vexact}
-\eps\sigma_i D''v(x_i) - b D^+ v(x_i) = 0 
\ee
on $I^N\setminus\{0,1\}$. Then, the constant $b$ is replaced with $b(x_i)$ everywhere in the scheme to form a discretization of \eqref{1DCD}. This gives
\[
\sigma_i = \frac{2\rho_i}{e^{2\rho_i} -1}, \ \ \rho_i = \frac{b(x_i)\chi}{2\eps},
\]
where $\chi$ is the step of the uniform mesh. In the nonuniform-mesh generalization we consider here, $\rho_i$ is formed simply by replacing $\chi$ with $h_{i+1}$. 
However, this construction does not preserve the property \eqref{vexact}. Consequently, the part of the error that is created by $v$ requires an analysis at the 
transition point $x_J$ that is different from the analysis at all other mesh points. For 
discretization meshes that are more nonuniform (such as the Shishkin-type meshes with a nonuniform part in the layer, \cite{RL99, Linss10}, 
or the Bakhvalov-type meshes, \cite{Bakh69, Vul83}), it may be important to have the property \eqref{vexact}. This can be achieved by defining $\sigma_i$ so that 
\eqref{vexact} holds on the nonuniform mesh. When the function $b(x)$ is not a constant, $\sigma_i$ looks like this:
\[
\sigma_i = \hbar_i \rho^-_i \frac{1- e^{-\rho^+_i}}{h_i \left( e^{-\rho^+_i}-1\right) + h_{i+1}\left(e^{\rho^-_i}-1\right)},
\]
where
\[
\rho^-_i = \frac{b(x_i)h_i}{\eps} \ \ \text{and} \ \ \rho^+_i = \frac{b(x_i)h_{i+1}}{\eps}.
\]
Other possibilities of forming $\sigma_i$ are to replace $D^+v(x_i)$ in \eqref{vexact} with the central approximations of $v'(x_i)$,
\[
D' v(x_i) = \frac{v(x_{i+1}) - v (x_{i-1})}{2\hbar_i}
\]
or
\be
\label{D'cent}
\bar D' v(x_i) = \frac{1}{2\hbar_i}\left[ -\frac{h_{i+1}}{h_i}v(x_{i-1}) + \left(\frac{h_{i+1}}{h_i} - \frac{h_i}{h_{i+1}}\right) v(x_i)  + \frac{h_i}{h_{i+1}} v(x_{i+1})\right].
\ee
Any of the three constructions of $\sigma_i$ described above produces a scheme that is identical to the ASI scheme on the uniform mesh. On a 
nonuniform mesh, however, these schemes are more complicated and harder to analyze than the scheme with $\sigma=\sigma_{\text{ASI}}$. This 
is why they are not considered here anymore.

We denote the truncation error of the scheme $L^N$ at the point $x_i$ by $\tau_i[g]$,
\be
\label{deftrunc}
\tau_i[g] = -\eps\sigma(\rho_i)D''g(x_i) - b(x_i) D^+ g(x_i) + \eps g''(x_i) + b(x_i) g'(x_i),
\ee
where the function $g$ is sufficiently smooth and  $i=1,2,\ldots, N-1$. The truncation error \eqref{deftrunc} can be estimated in different 
ways depending on how far the terms $g(x_{i\pm1})$ within $D''g(x_i)$ and 
$D^+g(x_i)$ are expanded about $x_i$. First, using \eqref{sigma1+}, we easily derive the following estimate from \eqref{deftrunc}:
\be
\label{trunc0}
|\tau_i[g]| \le C\left( \eps \| g''\|_{[x_{i-1}, x_{i+1}]} + \| g' \|_{[x_i, x_{i+1}]}\right).
\ee
Further, it is shown in \cite{VNfc} that, using different notation,
\be
\label{trunc}
|\tau_i[g]| \le  E^0_i[g] + C\left( E^1_i[g] + E^2_i[g]\right),
\ee
where
\[
E^0_i[g] = \eps|\sigma(\rho_i)+\rho_i-1||g''(x_i)|,
\]
\[
E^1_i[g] = h_{i+1}^2 \| g'''\|_{[x_{i}, x_{i+1}]},
\]
and, taking $\sigma(\rho_i)\le \tilde C$ from \eqref{sigma1+} into account,
\[
E^2_i[g] = \eps\left[ (h_{i+1}-h_i)|g'''(x_i)| + h_{1+1}^2 \| g^{(4)}\|_{[x_{i-1}, x_{i+1}]}\right].
\]
Another, first-order, estimate of the truncation error is also derived in \cite{VNfc},
\be
\label{trunc1}
|\tau_i[g]|  \le  Ch_{i+1}\left( \eps \|g'''\|_{[x_{i-1},x_{i+1}]} + \|g''\|_{[x_i, x_{i+1}]} \right).
\ee

\section{The Numerical Method}
\label{sec:nm}

The discretization of the problem \eqref{1DCD} by the general Samarskii-type scheme is
\begin{eqnarray}
\label{discr_u}
 & U^N_0   =   0, \nonumber \\
 & L^N U^N_i  =  f(x_i), \ \ i=1,2,\ldots, N-1, \\
 & U^N_N  =  0. \nonumber
\end{eqnarray}
This system has a unique solution $U^N$ because $L^N$ satisfies the discrete maximum principle. The discretization \eqref{discr_u} on the mesh $S_N$ is analyzed in 
\cite{VNfc} under appropriate conditions ($S_{1/\eps}$ is not considered there). It is proved that $U^N$ is a first-order $\eps$-uniform approximation of $u^N$, 
$\left\| u^N - U^N\right\|\le CN^{-1}$. The same result was previously proved in \cite{Linss02} for 
a class of schemes, including the Samarskii-type ones, derived using the finite-volume framework. 
This estimate can be considerably improved if we discretize the problem \eqref{wprob}, 
\begin{eqnarray}
\label{discr_w}
 & W^N_0   =   -u_0(0), \nonumber \\
 & L^N W^N_i  =  \eps u''_0(x_i), \ \ i=1,2,\ldots, N-1, \\
 & W^N_N  =  0. \nonumber
\end{eqnarray}
The discrete problem \eqref{discr_w} is analyzed in \cite{VNfc} on the mesh $S_N$, and the following result is obtained.

\begin{thm}[{\cite[Remark 2]{VNfc}}]
\label{thm:main_old}
Let $b,c,f\in C^4(I)$ and \eqref{bc_cond} hold, and suppose the solution $u_0$ of the reduced problem \eqref{red} is 
known. Further, let $w$ be the solution of the problem \eqref{wprob} and 
let $W^N$ be the solution of the discrete problem \eqref{discr_w} on the mesh $S_N$ with the parameter $a\ge 6$.
Then, under the assumptions \eqref{sigma1+} and \eqref{sigma2}, it holds that
\[
\|w^N - W^N \| \le C \max\left\{ N^{-2}(\ln N)^2,\ \eps^2 N^{-1} \right\}. 
\]
\end{thm}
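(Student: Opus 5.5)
The plan is a truncation-error analysis combined with barrier functions, using the discrete maximum principle for $L^N$, which holds because $\sigma\ge 0$. Set $e^N = w^N - W^N$. Since $Lw = \eps u_0''$, the error satisfies $e_0=e_N=0$ and $L^N e_i = \tau_i[w] + c(x_i)\cdot 0$, where $\tau_i$ is the truncation error defined in \eqref{deftrunc}. I split $w = y + v$ and estimate $\tau_i[y]$ and $\tau_i[v]$ separately, choosing among \eqref{trunc0}, \eqref{trunc} and \eqref{trunc1} according to the location of $x_i$ and the available derivative bounds.

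\emph{The smooth part $y$.} By \eqref{dery}, $|y^{(j)}|\le C(\eps+\eps^{1-j}e^{-\beta x/\eps})$, so the regular part of $y$ is only of size $\eps$.
\begin{itemize}
\item In the coarse region $i\ge J+1$, I use \eqref{trunc}. The term $E^0_i$ is bounded through \eqref{sigma2} by $\eps\bar C\min\{\rho_i,\rho_i^2\}|y''|$, where $\rho_i\sim N^{-1}\eps^{-1}$. Taking $\min\le\rho_i$ gives $E^0_i\le C H\,\eps$ for the regular part. $E^1_i$ gives $H^2\eps$. $E^2_i$ gives $\eps\cdot\eps H^2$, using $h_{i+1}-h_i=0$ except at $i=J$.
\item The exponential parts in the coarse region carry a factor $e^{-\beta x_J/\eps}=N^{-a}$, so \eqref{trunc0} makes them negligible.
\item In the fine region, $h\le C\eps N^{-1}\ln N$ and $\rho_i\le CN^{-1}\ln N$. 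Taking $\min\le\rho_i^2$ in \eqref{sigma2}, every term of \eqref{trunc} is bounded by $C N^{-2}(\ln N)^2(\eps+\eps^{-1}e^{-\beta x_i/\eps})$.
\end{itemize}
So the truncation error of $y$ is $\Oh(\eps^2N^{-1}+\eps N^{-2})$ outside the layer and $\Oh(N^{-2}\ln^2N\,\eps^{-1}e^{-\beta x_i/\eps})$ inside it. The layer term is handled by the standard Shishkin barrier $\prod_{k\le i}(1+\beta h_k/(2\eps))^{-1}$, scaled by $CN^{-2}(\ln N)^2$, which yields a contribution $CN^{-2}(\ln N)^2$. The constant-size term needs care: a truncation error of $C\eps^2N^{-1}$ everywhere, with barrier $x$ or $1-x$ (using $L^N x\ge\beta$), contributes $C\eps^2N^{-1}$. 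This is exactly the second term in the max.

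\emph{The layer part $v$.} Here $|v^{(j)}|\le C\eps^{-j}e^{-b(0)x/\eps}$.
\begin{itemize}
\item In the fine region, \eqref{trunc} with $\rho_i^2$ gives $CN^{-2}(\ln N)^2\eps^{-1}e^{-\beta x_i/\eps}$, treated as above. The transition point $i=J$, where $h_{J+1}-h_J\ne0$, is also covered here.
\item At $i\ge J$, I use \eqref{trunc0}, which gives $C\eps^{-1}e^{-\beta x_i/\eps}\le C\eps^{-1}N^{-a}$.
\item Alternatively, I use the direct bound $|v(x_i)|\le CN^{-a}$ for $i\ge J$ together with $|V_i|$. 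Comparing $e^N$ on $[x_J,1]$ with the barrier $CN^{-a}$ plus a term handling the interior truncation error avoids the $\eps^{-1}$ factor. To make this work I would treat the truncation error at $x_J$ through a barrier whose $L^N$-image at $x_J$ is of size $\eps^{-1}$ times its magnitude. The $\eps^{-1}$ loss is then absorbed by taking $a\ge6$ large, since $\eps^{-1}N^{-a}$ is still dangerous only when $\eps\ll N^{-a}$, and in that case $hN\ge$ something; otherwise it is handled by the barrier $\eps/\eps$ scaling.
\end{itemize}

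\emph{Main obstacle.} The hardest step is the transition point and coarse region for $v$, where $\eps^{-1}$ factors appear. My first attempt would be a barrier of the form $N^{-a}\cdot$(discrete exponential decaying in the coarse mesh) plus the observation that the coarse-mesh operator, being a Samarskii-type scheme with large $\rho_i$, behaves like upwinding scaled by $1/H$. This gives $L^N$ of a unit step function $\ge C/H$ at $x_J$, which converts a truncation error $\eps^{-1}N^{-a}\cdot H$-weighted into $\le CN^{-a+2}$. With $a\ge6$ this is well below $N^{-2}$.

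Combining the three barriers through the discrete maximum principle gives $\|w^N-W^N\|\le C\max\{N^{-2}(\ln N)^2,\eps^2N^{-1}\}$.
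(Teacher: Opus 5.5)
Your overall architecture (discrete maximum principle, truncation-error bounds from \eqref{trunc} and \eqref{trunc0}, the barrier $1-x$ plus a discrete exponential) is the barrier-function technique the paper says was used for this result, cf.\ Theorem \ref{thm:bar}. The argument breaks down at the transition point, however. For $v$ you only reach $|\tau_i[v]|\le C\eps^{-1}N^{-a}$ for $i\ge J$. Neither a barrier nor the choice $a\ge 6$ can absorb this, because $\eps$ is independent of $N$. For $\eps\ll N^{-1}$ the discrete Green's function of $L^N$ for the node $x_J$ is of order $H$, so a truncation error of that size at $x_J$ could genuinely produce an error of order $\eps^{-1}N^{-a-1}$. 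Your step-function computation gives exactly this, not $CN^{2-a}$, which would require $\eps\ge N^{-3}$. Moreover, the step ($\phi_i=1$ for $i\le J$, $\phi_i=0$ for $i>J$) is not a supersolution, since $L^N\phi_{J+1}=-\eps\sigma(\rho_{J+1})/H^2\le 0$. The route through $V_i$ needs a discrete splitting of $W^N$ that you never set up.

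The missing idea is to sharpen the truncation-error bound itself by not passing through $\|v''\|$ and $\|v'\|$ on the coarse side. Bound the difference quotients by function values: $\eps\sigma(\rho_J)|D''v(x_J)|\le C\eps\hbar_J^{-1}\|v'\|_{[x_{J-1},x_{J+1}]}$ and $|D^+v(x_J)|\le 2H^{-1}\|v\|_{[x_J,x_{J+1}]}$, so $\eps^{-1}$ is replaced by $H^{-1}\le CN$. For the continuous part use $\eps v''+b(0)v'=0$, which gives $|\eps v''(\xi)+b(\xi)v'(\xi)|\le C(\xi/\eps)e^{-b(0)\xi/\eps}\le CN^{-a}$ because $b(0)>\beta$. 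Together these yield $|\tau_J[v]|\le CN^{1-a}$ for any $\sigma$ satisfying \eqref{sigma1+}, as in \eqref{trunc_v4}--\eqref{trunc_v5} and the proof of Theorem \ref{thm:main_SN}. The same reasoning handles $i\ge J+1$.

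The $y$-part also has errors.
\begin{itemize}
\item In the coarse region you bound $\min\{\rho_i,\rho_i^2\}$ by $\rho_i$, giving $E^0_i\le CH\eps$, of order $\eps N^{-1}$. This is not bounded by $C\max\{N^{-2}(\ln N)^2,\eps^2N^{-1}\}$ (take $\eps=N^{-1/2}$). You must use $\rho_i^2$, which gives $E^0_i\le CH^2$.
\item Your summary $\Oh(\eps^2N^{-1}+\eps N^{-2})$ does not follow from your own bounds. The $\eps^2N^{-1}$ term of the theorem actually arises only at $x_J$, from the term $\eps(h_{J+1}-h_J)|y'''(x_J)|$ of $E^2_J$, whose regular part is at most $C\eps^2H$.
\item You never treat $i=J$ for $y$. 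Your claim that $i=J$ is covered by the fine-region estimate for $v$ is false, since $h_{J+1}=H$ makes $\rho_J$ large and $E^1_J$, $E^2_J$ contain $H$.
\item \eqref{dery} bounds $y$ as a single function, so you cannot apply \eqref{trunc} to a ``regular part'' and \eqref{trunc0} to an ``exponential part''. At $i=J,J+1$, applying \eqref{trunc} to all of $y$ produces terms such as $\eps^{-1}N^{-a-1}$ and $\eps^{-2}N^{-a-2}$, from derivatives on intervals reaching $x_{J-1}$ or $x_J$. Applying \eqref{trunc0} instead gives only $C(\eps+N^{-a})$. You therefore need a case split of $\eps$ against a power of $N$, as in the proof of Lemma \ref{lem:trunc_ySN}, and this is where a lower bound on $a$ actually enters.
\item $L^Nx_i=-b(x_i)+c(x_i)x_i$, which is negative when $c\equiv 0$, so the linear barrier must be $1-x$.
\item For a general $\tilde C$ in \eqref{sigma1+}, the rate $\beta/2$ in your discrete exponential must be reduced.
\end{itemize}
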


Since the term $N^{-2}(\ln N)^2$ usually dominates the above error estimate, the Samarskii-type schemes generally achieve $\eps$-uniform accuracy of almost second 
order. This is confirmed by the numerical results in \cite{VNfc}. However, as mentioned in the introduction, the ASI scheme produces results that are more accurate than 
what Theorem \ref{thm:main_old} indicates, and this is what we want to analyze in the present paper. We consider both $S_N$ and $S_{1/\eps}$ meshes. 
We still discretize \eqref{wprob}, but use the decomposition $w=y+v$ to estimate the error. We initially assume that we know the reduced solution $u_0$.  
The case when $u_0$ is not known and needs to be calculated numerically can be analyzed like in \cite[Subsection 4.1]{VNfc}. This is briefly discussed in Subsection 
\ref{sec:u0_numer} later in the text.

Thus, throughout the rest of the paper, we analyze the ASI discretization of the problem \eqref{wprob}:
\begin{eqnarray}
\label{discr_ASI}
 & W^N_0   =   -u_0(0), \nonumber \\
 & L^N_{\text{ASI}} W^N_i  =  \eps u''_0(x_i), \ \ i=1,2,\ldots, N-1, \\
 & W^N_N  =  0, \nonumber
\end{eqnarray}
where $L^N_{\text{ASI}}$ stands for the scheme $L^N$ with $\sigma = \sigma_{\text{ASI}}$. From now on, the label `ASI' is omitted so that 
$L^N=L^N_{\text{ASI}}$ and $\sigma = \sigma_{\text{ASI}}$.

From \eqref{deftrunc}, we immediately have
\be
\label{deftrunc1}
\tau_i[w] = L^N w(x_i) - f(x_i) = L^N\left[w(x_i) - W^N_i\right],
\ee
which is used to get estimates of $\|w^N - W^N \|$. This requires special techniques instead of the simple $\eps$-uniform stability 
of the scheme $L^N$ because, when $N\to\infty$, the truncation error does not converge to 0 uniformly in $\eps$, \cite{VN14}. We consider two
such techniques here, the barrier-function technique (which we also used to prove Theorem \ref{thm:main_old}) and the preconditioning 
technique.

\section{The Two Proof Techniques}
\label{sec:bar-pre}

We first describe the barrier-function method of proof. For this technique in general, see \cite{KT78, SR97} for instance. 
Regarding some specific details, we follow 
\cite{NV18, VNfc}. The results are somewhat different on $S_N$ and $S_{1/\eps}$, but in both cases, we use the truncation-error 
estimate in the form of
\be
\label{trunc_bar}
|\tau_i[w]| \le \tau^*_i := C^*\left\{ \begin{array}{ll} 
\eta_1 \left(1 + \eps^{-1} e^{-\beta x_i/\eps}\right), & 1 \le i \le J-1, \\
\eta_2, & J\le i \le N-1,
\end{array}\right.
\ee
where $0\le\eta_j\le C$ as $\eps\to 0$ (it is even possible that $\eta_j$ depends on $\eps$ and tends to 0 when $\eps$ does), and $\eta_j\to 0$ as 
$N\to\infty$, $j=1,2$.

\begin{thm}
\label{thm:bar}
Let $w$ be the solution of the problem \eqref{wprob} satisfying \eqref{bc_cond}, and 
let $W^N$ be the solution of the discrete problem \eqref{discr_w} on the mesh $S_\lambda$.
If the truncation error satisfies \eqref{trunc_bar}, then it holds that
\[
\|w^N - W^N \| \le C\eta,
\]
where $\eta=\max\{\kappa \eta_1, \eta_2\}$ and
\[
\kappa = \left\{ \begin{array}{ll}
1 & \text{if } \ \lambda = N, \\
1+ |\ln\eps|N^{-1} & \text{if } \ \lambda=\eps^{-1}.
\end{array}\right.
\]
\end{thm}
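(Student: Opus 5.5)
We use a discrete barrier function together with the discrete maximum principle for $L^N$ (valid since $\sigma\ge 0$ and $c\ge 0$). By \eqref{deftrunc1}, the error $e_i=w(x_i)-W^N_i$ satisfies $e_0=e_N=0$ and $|L^N e_i|=|\tau_i[w]|\le\tau^*_i$. It therefore suffices to build a mesh function $\Phi$ with $\Phi_0,\Phi_N\ge 0$, $L^N\Phi_i\ge\tau^*_i$ for $1\le i\le N-1$, and $\|\Phi\|\le C\eta$. Comparison then gives $|e_i|\le\Phi_i$.

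We take $\Phi=C^*(\Phi^{(1)}+\Phi^{(2)})$ with two pieces.

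\emph{The smooth part.} Let $\Phi^{(2)}_i=\gamma\,\eta_2(1+x_i)$, which is bounded by $2\gamma\eta_2$. Then
\[
L^N\Phi^{(2)}_i=-b(x_i)\gamma\eta_2\cdot(-1)\cdots,
\]
and more precisely, since $D''$ annihilates linear functions and $D^+(1+x)=1$, we get $L^N\Phi^{(2)}_i=\gamma\eta_2 b(x_i)+c(x_i)\Phi^{(2)}_i\ge\gamma\beta\eta_2$. The sign works because the convection term is $-bD^+$, so we use a decreasing function: take $\Phi^{(2)}_i=\gamma\eta_2(2-x_i)$, for which $-bD^+$ contributes $+b\gamma\eta_2$. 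With $\gamma=1/\beta$ this covers the constant parts, namely $\eta_2$ for $i\ge J$ and $\eta_1$ for $i\le J-1$. For the latter we use the same construction with $\eta_1$, or simply $\max\{\eta_1,\eta_2\}$.

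\emph{The layer part.} We need a discrete exponential $\Phi^{(1)}$ with $L^N\Phi^{(1)}_i\ge\eta_1\eps^{-1}e^{-\beta x_i/\eps}$ for $i\le J-1$ and $L^N\Phi^{(1)}_i\ge 0$ elsewhere. Following \cite{NV18,VNfc}, set $\Phi^{(1)}_i=\eta_1\prod_{k=1}^{i}(1+\beta h_k/(2\eps))^{-1}$, a decreasing mesh function. Let $\mu=\beta/(2\eps)$, so that $D^+\Phi^{(1)}_i=-\mu\Phi^{(1)}_{i+1}$. On the fine part, the diffusion term is controlled using $0\le\sigma\le1$ and $\sigma(\rho)\le 1/(1+\rho)$ from the Samarskii comparison. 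The key fact is that $-\eps\sigma D''\Phi^{(1)}$ is negative but dominated by the convection term $b\mu\Phi^{(1)}_{i+1}$. On a uniform piece, $D''\Phi^{(1)}_i=\mu^2\Phi^{(1)}_{i+1}/(1+\mu h)$, so
\[
-\eps\sigma\mu^2\Phi^{(1)}_{i+1}/(1+\mu h)+b\mu\Phi^{(1)}_{i+1}\ \ge\ \mu\Phi^{(1)}_{i+1}\bigl(b-\beta/2\bigr)\ \ge\ \tfrac{\beta}{2}\,\mu\,\Phi^{(1)}_{i+1}.
\]
At the transition point $x_J$, where $h_{i+1}>h_i$, the $D''$ term gets an extra favorable sign, and this needs a separate check. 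The lower bound then follows from $\Phi^{(1)}_{i+1}\ge Ce^{-\beta x_i/\eps}$ for $i\le J-1$, using $\ln(1+t)\le t$ and $\mu h\le C$.

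\emph{Bounding $\Phi^{(1)}$.} It is bounded by $\eta_1$ on the fine part. The remaining issue is the size of $\Phi^{(1)}$ and the factor $\kappa$. The estimate $\Phi^{(1)}_i\ge Ce^{-\beta x_i/\eps}$ requires $\mu h$ bounded, so $h\le C\eps$. For $S_N$ we have $h\le C\eps N^{-1}\ln N$, which is fine. For $S_{1/\eps}$, however, $h\le C\eps N^{-1}|\ln\eps|$ may be large relative to $\eps$. In that case $\prod(1+\mu h)^{-1}$ decays more slowly than the exponential only by a factor like $(1+\mu h)$. Compensating for this by multiplying $\Phi^{(1)}$ by $(1+\mu h)\le C(1+|\ln\eps|N^{-1})=C\kappa$ gives the bound $C\kappa\eta_1$.

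\emph{Main obstacle.} I expect the hardest step to be this comparison $\Phi^{(1)}_{i+1}\gtrsim e^{-\beta x_i/\eps}$ uniformly on the $A$-mesh, together with the sign analysis at $x_J$. I would handle it by using $1+t\le e^{t}$ for the lower bound of the product and absorbing the one-step shift into the factor $\kappa$.
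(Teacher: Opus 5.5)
Your argument works and is essentially the paper's proof. You use a decreasing linear barrier for the constant parts of $\tau^*_i$, add the discrete exponential $\prod_{k=1}^{i}(1+\beta h_k/(c\eps))^{-1}$ (you take $c=2$, the paper $c=3$), and conclude with the discrete maximum principle. The paper gets $\kappa$ from $L^NB^N_i\ge \beta^2B^N_i/(3(3\eps+\beta h))\ge C_*B^N_i/(\eps\kappa)$ and puts it into the overall scale $\eta$. That is the same loss you absorb by multiplying your layer barrier by $1+\mu h\le C\kappa$.

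Some details need correcting.

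\emph{The uniform pieces.} There $D''\Phi^{(1)}_i=\mu^2\Phi^{(1)}_{i+1}$, without the factor $(1+\mu h)^{-1}$. Your inequality still holds.

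\emph{The transition point.} In general $D''\Phi^{(1)}_i=\mu^2(h_{i+1}/\hbar_i)\Phi^{(1)}_{i+1}$, so at $x_J$ the diffusion term is \emph{less} favorable, not more. Since $h_{J+1}/\hbar_J<2$ and $\eps\mu=\beta/2$, you still get $L^N\Phi^{(1)}_J\ge\mu\Phi^{(1)}_{J+1}(b(x_J)-\beta)>0$. This is exactly where the choice $\mu\le\beta/(2\eps)$ is needed.

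\emph{The factor $\kappa$.} The bound $\Phi^{(1)}_i\ge\eta_1e^{-\mu x_i}$ holds for any size of $\mu h$, by $1+t\le e^t$. The ratio $h/\eps$ enters only through the shift from $\Phi^{(1)}_i$ to $\Phi^{(1)}_{i+1}$, and even this is harmless. For $1\le i\le J-1$ one has $x_{i+1}\le 2x_i$, hence $\Phi^{(1)}_{i+1}\ge\eta_1e^{-\beta x_{i+1}/(2\eps)}\ge\eta_1e^{-\beta x_i/\eps}$. So your unscaled barrier already gives the bound with $\kappa=1$ on both meshes. The same holds for the paper's barrier, since $L^NB^N_i\ge\beta^2B^N_{i+1}/(9\eps)$. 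The ``main obstacle'' you anticipate does not actually arise.
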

\begin{proof}
Let
\[
B^N_0 = 1, \ \ B^N_i = \prod_{j=1}^{i}\left(1+\frac{\beta h_j}{3\eps}\right)^{-1}, \ \ i=1,2,\ldots, N.
\]
We have that
\[ 
e^{-\beta x_i/\eps} \le e^{-\beta x_i/(3\eps)} = \prod_{j=1}^{i}\left(e^{-\beta h_j/(3\eps)}\right) \le B^N_i. 
\]

We also define the barrier function
\[
\gamma^N_i= \eta\left(\gamma^{1,N}_i + \gamma^{2,N}_i\right), \ \ i=0,1,\ldots, N,
\]
where
\[
\gamma^{1,N}_i:=C_1(1-x_i) \ \text{ and } \ \gamma^{2,N}_i:=C_2 B^N_i 
\]
with appropriately chosen constants $C_1$ and $C_2$. We immediately have
\[
\gamma^N_i \ge 0 = \left| w(x_i) - W^N_i\right|, \ \ i =0, N.
\]
It also holds that $\|\gamma^N\| \le C\eta$. Therefore, keeping \eqref{deftrunc1} and the maximum principle in mind, we only need to 
prove the inequality 
\be
\label{comp}
L^N \gamma^N_i \ge \tau^*_i, \ \ 1\le i \le N-1.
\ee

First,
\be
\label{complin}
L^N \gamma^{1,N}_i \ge C_1\beta \ge C^*, \ \ 1\le i \le N-1,
\ee
if $C_1$ is chosen so that $C_1\ge C^*/\beta$. Next, for $2\le i\le N-1$, we have
\[
L^N B^N_i \ge \left[ - \frac{\sigma(\rho_i)}{\hbar_i} \cdot\frac{(\beta/3)^2 h_{i+1}}{\eps+(\beta/3) h_{i+1}} 
+ b(x_i)\frac{\beta}{3\eps+\beta h_{i+1}}\right] B^N_i.
\]
From there, because of \eqref{sigma1+} with $\tilde C=1$, we get
\begin{eqnarray}
\label{compB}
L^N B^N_i & \ge & \frac{\beta}{3\eps+\beta h_{i+1}}\left[ b(x_i) - \beta\frac{h_{i+1}}{3\hbar_i}\right] B^N_i \nonumber \\
& \ge & \frac{\beta^2}{3(3\eps+\beta h_{i+1})} B^N_i \ \ (\text{using } \hbar_i \ge \frac{h_{i+1}}{2}).
\end{eqnarray}
The case $i=1$ requires the use of $B^N_0=1$, but the inequality \eqref{compB} is still true, see \cite[Remark 1]{VNfc}.
Because of \eqref{compB}, $L^N B^N_i \ge 0$, which combined with \eqref{complin} gives 
\be
\label{comp2}
L^N \gamma^N_i \ge C^* \eta \ge C^*\eta_2 = \tau^*_i, \ \ J \le i \le N-1.
\ee

When $1\le i \le J-1$, \eqref{compB} implies that
\[
L^N \gamma^{2,N}_i \ge  C_2\frac{\beta^2}{3(3\eps+\beta h)} B^N_i \ge  C_2\frac{C_*}{\eps\kappa}B^N_i
\]
with some constant $C_*$. We now choose $C_2\ge C^*/C_{*}$ and combine the above with \eqref{complin} to get
\be
\label{comp1}
L^N \gamma^N_i \ge C^*\kappa \eta_1 \left( 1 + \frac{1}{\eps\kappa} B^N_i\right) \ge \tau^*_i, \ \ 1\le  i \le J-1.
\ee
Then, \eqref{comp} follows from \eqref{comp2} and \eqref{comp1}.
\end{proof}

We now turn to the preconditioning technique of \cite{VN14}. Only the upwind scheme on the mesh $S_N$ is 
considered in \cite{VN14}; $S_{1/\eps}$ is not discussed. In the preconditioning approach, the discrete equations at $x_i$, $1\le i\le J$, 
are multiplied by $\hbar_i/H$. This renders the discrete system well-conditioned (otherwise, the condition number grows unboundedly when 
$\eps\to 0$), hence the name of the technique.
At the same time, the truncation error becomes almost-first-order accurate uniformly in $\eps$, a property that the 
upwind scheme generally does not have on Shishkin-type meshes. 
In \cite{VN21}, a slight modification of the original approach is used on the mesh $S_{1/\eps}$. We apply the same 
modification to both meshes here. Specifically, we scale the discrete problem \eqref{discr_ASI} as follows:
\begin{eqnarray}
\label{discr_pre}
 & W^N_0  =  -u_0(0), \nonumber \\
& \tilde L^N W^N_i := m_i L^N W^N_i  =  m_i\eps u''_0(x_i), \ \ i=1,2,\ldots, N-1, \\
& W^N_N  =  0, \nonumber
\end{eqnarray}
where 
\[
m_i = \left\{ \begin{array}{ll}
\dss\frac{h}{H}, & 1\le i \le J-1, \\
& \\
1, & J\le i \le N-1.
\end{array}\right.
\]
The truncation error of the operator $\tilde L^N$ is
\[
\tilde \tau_i[w] = m_i \tau_i[w], \ \  1\le i \le N-1. 
\]
In the preconditioning approach, the factor $e^{-\beta x_i/\eps}$ in the estimate of $|\tau_i[w]|$ is no longer needed. 
We assume that we have the following:
\be
\label{trunc_pre}
|\tau_i[w]| \le C' \left\{ \begin{array}{ll} 
\tilde\eta_1\eps^{-1}, & 1 \le i \le J-1, \\
\tilde\eta_2, & J\le i \le N-1.
\end{array}\right.
\ee

\begin{thm}
\label{thm:pre}
Let $w$ be the solution of the problem \eqref{wprob} satisfying \eqref{bc_cond}, and 
let $W^N$ be the solution of the discrete problem \eqref{discr_w} on the mesh $S_\lambda$.
If the truncation error satisfies \eqref{trunc_pre}, then it holds that
\[
\|w^N - W^N \| \le C\tilde\eta,
\]
where $\tilde\eta=\max\{\tilde\eta_1 \ln\lambda, \tilde\eta_2\}$.
\end{thm}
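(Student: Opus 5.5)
The plan is to prove stability of the preconditioned operator $\tilde L^N$ in the maximum norm, uniformly in $\eps$, in the form
\[
\|V^N\| \le C\left(|V^N_0|+|V^N_N|+\max_{1\le i\le J-1}\ln\lambda\,|\tilde L^N V^N_i| + \max_{J\le i\le N-1}|\tilde L^N V^N_i|\right).
\]
I will then apply this to $V^N = w^N - W^N$. Since $\tilde\tau_i[w] = m_i\tau_i[w]$ and $h/H \le C\eps\ln\lambda$ (from $h\le C\eps N^{-1}\ln\lambda$ and $H\ge CN^{-1}$), assumption \eqref{trunc_pre} gives $|\tilde\tau_i[w]|\le C\tilde\eta_1\ln\lambda$ for $1\le i\le J-1$. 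For $J\le i\le N-1$ it gives $|\tilde\tau_i[w]|\le C'\tilde\eta_2$. The extra $\ln\lambda$ weight in the fine part must therefore not appear a second time. For this reason I will not prove plain stability; I will construct barriers that do the bookkeeping directly.

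The comparison argument will again use the discrete maximum principle, which $\tilde L^N$ inherits from $L^N$ because $m_i>0$. I will take $\gamma^N_i = \tilde\eta\left(C_1(1-x_i) + C_2\phi_i\right)$, where $\phi$ is supported on the fine mesh. On the coarse part, $i\ge J$, the linear term gives $L^N\gamma^N_i\ge C_1\beta\tilde\eta$, which dominates $C'\tilde\eta_2$. For $\phi$ I will take the piecewise linear function $\phi_i = 1 - x_i/\xi$ for $i\le J$ and $\phi_i=0$ for $i\ge J$, so that $\phi$ is bounded by $1$. On the fine mesh, for $1\le i\le J-1$, $D''\phi_i=0$ and $-b(x_i)D^+\phi_i = b(x_i)/\xi \ge \beta/\xi$. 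Hence
\[
m_i L^N \phi_i \ge \frac{h}{H}\cdot\frac{\beta}{\xi} = \frac{\beta}{JH} = \frac{\beta}{QNH}\ge C>0,
\]
because $NH\le C$. The term $c(x_i)\phi_i\ge0$ only helps. So $\tilde L^N\gamma^N_i \ge C_2 C\tilde\eta \ge C\tilde\eta_1\ln\lambda$, since $\tilde\eta\ge\tilde\eta_1\ln\lambda$. At $x_J$ the function $\phi$ has a kink. There $D^+\phi_J=0$ and $D^-\phi_J=-1/\xi$, so $-\eps\sigma D''\phi_J = -\eps\sigma(\rho_J)/(\xi\hbar_J) \le 0$. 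This sign is unfavorable, and handling this point is the main obstacle.

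To deal with $x_J$ I will use that $\rho_J = b(x_J)H/(2\eps)$ is large, so $\sigma_{\text{ASI}}(\rho_J) = 2\rho_J/(e^{2\rho_J}-1)$ is exponentially small. Its contribution is then at most $C\eps\rho_J e^{-2\rho_J}/(\xi H)\le C H^{-1}e^{-\beta H/\eps}/(\eps\ln\lambda)$, which I expect to be $\le C$, and it can be absorbed by taking $C_1$ large. If this fails (for a general $\sigma$ only $\sigma\le\tilde C$ holds, which gives a bound of order $\eps/(\xi H)\sim N/\ln\lambda$), I would instead use a smoothed barrier such as $\phi_i = B^N_i$ restricted to the fine mesh. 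Alternatively, I would move the kink to $x_{J-1}$, where the mesh is uniform and $\sigma\le1$ gives $\eps D''$ of order $\eps/(\xi h)$. That term is multiplied by $m_{J-1}=h/H$, giving a contribution of order $\eps/(\xi H)\le C N/\ln\lambda$, which is too big. Therefore the exponential smallness of $\sigma_{\text{ASI}}$ at $x_J$, or a barrier whose discrete second difference is nonnegative at the kink, is what I will rely on. With \eqref{comp}-type inequalities established, the maximum principle gives $\|w^N-W^N\|\le\|\gamma^N\|\le C\tilde\eta$.
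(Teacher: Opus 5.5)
Your fine-mesh and coarse-mesh computations are correct. Your $\phi_i=1-x_i/\xi$ does the same job as the paper's index-linear term $\hat C_1-Hi$: a slope of order $1/\xi$ on the fine mesh, multiplied by $m_i=h/H$, gives an $\eps$-uniform positive lower bound. The gap is at $x_J$, which you flagged but did not close.

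Your main argument assumes that $\rho_J=b(x_J)H/(2\eps)$ is large, but nothing forces this. Since $H\sim N^{-1}$, we have $\rho_J\sim(\eps N)^{-1}$, while $\xi<Q$ only requires $\eps\lesssim 1/\ln\lambda$. For instance, if $\eps=N^{-1}$, then $\rho_J$ is bounded, $\sigma_{\text{ASI}}(\rho_J)\ge c>0$ and $\hbar_J\le H\le CN^{-1}$, so
\[
\frac{\eps\sigma(\rho_J)}{\xi\hbar_J}=\frac{\beta\sigma(\rho_J)}{a\,\hbar_J\ln\lambda}\ge c\,\frac{N}{\ln\lambda}.
\]
The term $C_1\beta$ can absorb this only if $C_1\gtrsim N/\ln\lambda$, which destroys $\|\gamma^N\|\le C\tilde\eta$. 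The factor $e^{-2\rho_J}$ helps only when $\eps N\ln N$ is sufficiently small.

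Your fallbacks fail as well:
\begin{itemize}
\item With $B^N$, the preconditioned operator gives at most $m_iL^NB^N_i\lesssim\ln\lambda\,B^N_i$ on the fine mesh, which is of order $\lambda^{-a/3}\ln\lambda$ near $x_J$. This cannot dominate the non-decaying bound $C\tilde\eta_1\ln\lambda$ on $|\tilde\tau_i[w]|$; that barrier is built for \eqref{trunc_bar}, not \eqref{trunc_pre}.
\item The favorable sign at the kink is $D''\phi_J\le 0$, not $\ge 0$. It forces $\phi_{J+1}\le\phi_J-H/\xi$, and $H/\xi\sim(\eps N\ln\lambda)^{-1}$ is unbounded as $\eps\to0$.
\end{itemize}

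The missing ingredient is the discrete layer function anchored at the transition point that the paper adds, $\psi_i=\min\{(1+R)^{J-i},1\}$ with $R=\beta H/\eps$. It has three properties:
\begin{itemize}
\item It equals $1$ on the fine mesh, so your fine-mesh bound is untouched.
\item For $J+1\le i\le N-1$ it satisfies $L^N\psi_i\ge(1+R)^{J-i-1}\frac{R}{H}\,[b(x_i)-\beta\sigma(\rho_i)]\ge0$, using only $\sigma\le1$.
\item At $x_J$ we have $D^-\psi_J=0$ and $D^+\psi_J=-\beta/(\eps+\beta H)$. Using $\hbar_J\ge H/2$ and $\sigma\le1$,
\[
L^N\psi_J\ge\frac{\beta\bigl(\eps\sigma(\rho_J)+\beta\hbar_J\bigr)}{\hbar_J(\eps+\beta H)}\ge\frac{\beta\,\sigma(\rho_J)}{2\hbar_J}.
\]
\end{itemize}
This lower bound carries the same factor $\sigma(\rho_J)/\hbar_J$ as your kink term $C_2\beta\sigma(\rho_J)/(a\hbar_J\ln\lambda)$. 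So adding $C_3\tilde\eta\,\psi$ to $\gamma^N$ with $C_3\ge 2C_2/(a\ln\lambda)$ absorbs the kink for all $\eps$ and $N$. No appeal to the exponential smallness of $\sigma_{\text{ASI}}$ is needed; only $0\le\sigma\le1$ is used.

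When $\eps\ll H$, the decisive piece is the convective part $b(x_J)\beta/(\eps+\beta H)\approx b(x_J)/H$. It comes from the bounded drop of $\psi$ across the single coarse step $[x_J,x_{J+1}]$, and it beats the kink term, which is at most of order $N/\ln\lambda$. With $\psi$ included, your barrier proves the theorem.
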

\begin{proof}
We apply the technique from \cite[proof of Theorem 4]{VN21} to the discrete problem \eqref{discr_pre} 
(which is equivalent to \eqref{discr_w}). This technique is used in the matrix form in \cite{VN21} for
the upwind scheme on the mesh $S_{1/\eps}$. We apply it here to both meshes using the discrete operator form, which makes the steps closer 
to those in the barrier-function approach.

Since $h/H\le C\eps\ln\lambda$, we have from \eqref{trunc_pre} that
\[
|\tilde\tau_i[w]| \le \hat C\tilde\eta, \ \ 1\le i\le N-1. 
\]
We define the barrier function
\[
\tilde\gamma^N_i = \frac{\hat C}{\beta}\tilde\eta \hat\gamma^N_i,
\]
with
\[
\hat\gamma^N_i :=  \hat C_1 -Hi + \hat C_2\min\left\{ (1+ R)^{J-i},\ 1\right\}, \ \ R=\frac{\beta H}{\eps}.
\]
The constants $\hat C_1$ and $\hat C_2$ are to be determined so that $\hat C_1 -Hi \ge 0$ (thus $\hat\gamma_i^N \ge 0$) and
\be
\label{hatgam}
\tilde L^N \hat\gamma^N_i \ge \beta
\ee
for $1\le i \le N-1$. Then, it immediately follows that
\[
\tilde \gamma^N_i \ge |\tilde\tau_i[w]|,
\]
which implies the assertion because $\tilde\gamma^N_i \le C\tilde\eta$. 

The rest of the proof is about using 
\[
\tilde L^N \hat \gamma^N_i \ge m_i \left[-\eps\sigma(\rho_i) D'' \hat\gamma^N_i - b(x_i)D^+ \hat\gamma^N_i\right].
\]
to show that \eqref{hatgam} is true. 

Let us consider $1\le i\le J-1$. Then, $\hat\gamma_j^N = \hat C_1 -Hj+ \hat C_2$ for $j=i-1, i, i+1$, and it follows that
\[
\tilde L^N \hat \gamma^N_i  \ge -\frac{\eps\sigma(\rho_i)}{hH}\cdot H + \frac{\eps\sigma(\rho_i)}{hH}\cdot H + b(x_i) \ge \beta, 
\]
which proves \eqref{hatgam} in this case.

If $J\le i \le N-1$, then $\tilde L^N \hat\gamma^N_i = L^N \hat\gamma^N_i$.  For $i=J$, we get
\begin{eqnarray*}
\tilde L^N \hat \gamma^N_i  & \ge & -\frac{\eps\sigma(\rho_i)H}{h\hbar_i} + \frac{\eps\sigma(\rho_i)}{\hbar_i} + b(x_i) 
+ \hat C_2\left[\frac{\eps\sigma(\rho_i)}{H\hbar_i}+\frac{b(x_i)}{H}\right]\frac{R}{1+R}\\
&\ge & -\frac{\eps\sigma(\rho_i)H}{h\hbar_i} + \beta + \hat C_2\frac{\eps\sigma(\rho_i)+\beta\hbar_i}{\hbar_i}\frac{\beta}{\eps + \beta H}\\
&= & \frac{1}{\hbar_i}\left[\hat C_2\beta \frac{\eps\sigma(\rho_i)+\beta\hbar_i}{\eps+\beta H}- \frac{\eps\sigma(\rho_i)H}{h}\right] + \beta.
\end{eqnarray*}
Next, we have
\[
\eps\sigma(\rho_i)+\beta\hbar_i \ge \frac12 [\eps\sigma(\rho_i)+\beta H] \ge 
\frac12 [\eps\sigma(\rho_i)+\beta\sigma(\rho_i) H],
\]
where the last inequality is true because of $1\ge \sigma(\rho_i)$, see Subsection \ref{sec:Sam}. This implies
\[
\tilde L^N \hat \gamma^N_i \ge \frac{\sigma(\rho_i)}{\hbar_i}\left[\frac{\hat C_2\beta}{2} - \frac{\eps H}{h}\right] + \beta.
\]
Since
\[
\frac{\eps H}{h} \le \frac{C}{\ln\lambda},
\]
$\hat C_2$ can be selected so that
\[
\frac{\hat C_2\beta}{2} - \frac{\eps H}{h} \ge 0.
\]
Thus, \eqref{hatgam} holds in this case as well. 

Let finally $J+1\le i \le N-1$. Then, \eqref{hatgam} follows from
\[
\tilde L^N \hat \gamma^N_i \ge b(x_i) +\hat C_2(1+R)^{J-i-1} P_i \ge \beta
\]
because
\begin{eqnarray*}
P_i & := & -\frac{\eps\sigma(\rho_i)}{H^2}[(1+R)^2 - (1+R)] - \left[ \frac{\eps\sigma(\rho_i)}{H^2} + \frac{b(x_i)}{H}\right] [1 -(1+R)] \\
& = & R\left[ -\frac{\eps\sigma(\rho_i)}{H^2}(1+R) + \frac{\eps\sigma(\rho_i)}{H^2} + \frac{b(x_i)}{H}\right] \\
& = & R\left[ -\frac{\beta \sigma(\rho_i)}{H} +  \frac{b(x_i)}{H}\right] \\
& \ge & \frac{R\beta}{H}[1- \sigma(\rho_i)] \\
& \ge & 0.
\end{eqnarray*}
\end{proof}

\begin{remark}
\label{rem:bar-pre}
The estimate \eqref{trunc_pre} follows from \eqref{trunc_bar} with $\tilde\eta_j=\eta_j$, $j=1,2$. Then, the result of Theorem 
\ref{thm:pre} on $S_N$ is worse than the result of Theorem \ref{thm:bar} because there is an extra $(\ln N)$-factor 
within the error estimate of Theorem \ref{thm:pre}, and on $S_{1/\eps}$ because $1+|\ln\eps|N^{-1}\le C|\ln\eps|$. 
However, it may be possible to derive \eqref{trunc_pre} in a different way, not from \eqref{trunc_bar}, and then $\tilde\eta_j$ and 
$\eta_j$ are not necessarily equal. We shall see later that this is possible on $S_{1/\eps}$.
\end{remark}

\section{The Truncation-Error Estimates}
\label{sec:trunc}

We first analyze the truncation error for the $v$-component of the solution decomposition. The following estimate from 
\cite[p. 1035]{KT78} is crucial in the analysis:  
\be
\label{trunc_vKT}
|\tau_i[v]| \le C \frac{\chi^2}{\eps(\chi+\eps)}e^{-\beta x_i/\eps},
\ee
where $x_i=\chi i$ and $\chi=1/N$ is the step of an arbitrary uniform mesh (the only mesh considered in \cite{KT78}). 

\begin{remark}
\label{rem:KTlemma}
The estimate \eqref{trunc_vKT} is derived as a step in the proof of Lemma 4.3 in \cite{KT78}. The proof of \eqref{trunc_vKT} is rather technical, and its 
generalization to the nonuniform seems very hard. This is why Bakhvalov-type meshes are not considered here. The piecewise uniform 
Shishkin mesh, on the other hand, is suitable for our analysis; we only need to find a different way of estimating $|\tau_J[v]|$.
\end{remark}

Before proceeding, we define
\[
a_* = \frac{\beta}{b(0)-\beta}= \frac{1}{q-1}, \ \ q=\frac{b(0)}{\beta}>1.
\]

\begin{lemma} 
\label{lem:trunc_vSN}
Let $b,c,f\in C^4(I)$ and \eqref{bc_cond} hold.
Then, the truncation error of the ASI scheme on the mesh $S_N$ with $a\ge a_*$ satisfies
\[
|\tau_i[v]| \le C \left\{\begin{array}{ll}
N^{-2}(\ln N)^2 e^{-\beta x_i/\eps}, & 1\le i \le J-1,  \\
N^{-a}, & J\le i \le N-1.
\end{array}\right.
\]
\end{lemma}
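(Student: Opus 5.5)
We split the indices into three groups: the fine part $1\le i\le J-1$, the coarse part $J+1\le i\le N-1$, and the transition point $i=J$.

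\emph{Fine part, $1\le i\le J-1$.} Here the mesh points $x_{i-1},x_i,x_{i+1}$ lie in $[0,\xi]$, where the mesh is uniform with step $h$. The scheme at $x_i$ is exactly the ASI scheme on a uniform mesh of step $h$. This holds because $\rho_i=b(x_i)h/(2\eps)$ and $D''$, $D^+$ use only the step $h$. The derivation of \eqref{trunc_vKT} in \cite{KT78} is local: it only uses the three points $x_{i-1},x_i,x_{i+1}$, equal steps $\chi$, and the exact form of $v$. We therefore invoke \eqref{trunc_vKT} with $\chi=h$ to get
\[
|\tau_i[v]|\le C\frac{h^2}{\eps(h+\eps)}e^{-\beta x_i/\eps}\le C\frac{h^2}{\eps^2}e^{-\beta x_i/\eps}.
\]
Since $h\le C\eps N^{-1}\ln N$ on $S_N$, this gives the stated bound $CN^{-2}(\ln N)^2e^{-\beta x_i/\eps}$. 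I would briefly verify that the Kellogg--Tsan argument indeed needs only uniformity of the three-point stencil and the bound $b>\beta$, which is plausible from Remark~\ref{rem:KTlemma}.

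\emph{Coarse part, $J+1\le i\le N-1$.} Here we use the crude estimate \eqref{trunc0}:
\[
|\tau_i[v]|\le C\bigl(\eps\|v''\|_{[x_{i-1},x_{i+1}]}+\|v'\|_{[x_i,x_{i+1}]}\bigr)\le C\eps^{-1}e^{-b(0)x_{i-1}/\eps}\le C\eps^{-1}e^{-b(0)\xi/\eps}.
\]
Since $\xi=(a\eps/\beta)\ln N$, we have $e^{-b(0)\xi/\eps}=N^{-aq}$, so $|\tau_i[v]|\le C\eps^{-1}N^{-aq}$. The factor $\eps^{-1}$ must be absorbed, and this is where $a\ge a_*$ enters. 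Note that $aq-a=a(q-1)\ge 1$ exactly when $a\ge a_*$. So it suffices to show $\eps^{-1}N^{-a(q-1)}\le C$, i.e.\ $\eps N\ge c$. For this I would use the sharper factor $\sigma(\rho_i)\le C/\rho_i$ together with $h_{i+1}=H$. When $\eps\ll N^{-1}$, we have $\rho_i=b(x_i)H/(2\eps)$ large, and then $\eps\sigma(\rho_i)\le C\eps^2/H$. The diffusion term is then bounded by $C\eps^2H^{-1}\eps^{-2}e^{-b(0)x_{i-1}/\eps}\le CN^{-aq}$. For the convection term, instead of $\|v'\|$ we write $|D^+v(x_i)|\le 2\|v\|/H\le CNe^{-b(0)x_i/\eps}$ and $|v'(x_i)|\le C\eps^{-1}e^{-b(0)x_i/\eps}$. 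Since $x_i\ge\xi+H$, the extra factor $e^{-b(0)H/\eps}$ kills $\eps^{-1}$. This leaves $CNN^{-aq}\le CN^{-a}$ because $a(q-1)\ge1$. In the opposite regime, $\eps\ge cN^{-1}$ and $\eps^{-1}\le CN$, so the same exponent count again gives $N^{-a}$.

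\emph{Transition point $i=J$ (main obstacle).} The stencil is nonuniform ($h_J=h$, $h_{J+1}=H$), so \eqref{trunc_vKT} is unavailable and the ASI fitting is broken, as noted after \eqref{vexact}. We bound each term of \eqref{deftrunc} separately. For the convection terms, $|b D^+v(x_J)|\le C\|v\|_{[\xi,1]}/H\le CNe^{-b(0)\xi/\eps}=CN^{1-aq}\le CN^{-a}$, and $|bv'(x_J)|\le C\eps^{-1}N^{-aq}$. For the diffusion terms, $\eps|v''(x_J)|\le C\eps^{-1}N^{-aq}$. The delicate term is $\eps\sigma(\rho_J)D''v(x_J)$, which contains $v(x_{J-1})/(h\hbar_J)$. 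Here $v(x_{J-1})\le CN^{-aq}e^{b(0)h/\eps}$ with $b(0)h/\eps\le C$ bounded, and $\hbar_J\ge H/2$. When $\rho_J$ is large, $\sigma(\rho_J)\le C\eps/H$, so this term is at most $C\eps^2/(hH^2)\,N^{-aq}\le C\eps N^{2}(\ln N)^{-1}N^{-aq}$. When $\rho_J$ is bounded, $\eps\ge cN^{-1}$, and the factor $\eps^{-1}$ is at most $N$. In both cases the bound is $\le CN^{1-aq}\le CN^{-a}$, again by $a(q-1)\ge1$. The $\eps^{-1}N^{-aq}$ terms are handled as in the coarse part, by splitting into the regimes $\eps N\ge c$ and $\eps N<c$. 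In the latter regime, I would avoid the bare pointwise $v'(x_J)$ and $v''(x_J)$ estimates by pairing $-\eps\sigma D''v-bD^+v$ against $\eps v''+bv'$ and using $-\eps v''=b(0)v'$. This cancellation (up to $(b(x_J)-b(0))v'$, with $|b(x_J)-b(0)|\le C\xi$) is what I expect to require the most care.
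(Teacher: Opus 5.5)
\textbf{Gap at the transition point.} Your fine-part argument is the paper's. Your coarse-part argument works, with one slip: the diffusion term there is $CH^{-1}e^{-b(0)x_{i-1}/\eps}\le CN^{1-aq}$, not $CN^{-aq}$, which is still $\le CN^{-a}$. The genuine gap is the term you call delicate at $i=J$.

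You isolate the nodal piece $\eps\sigma(\rho_J)v(x_{J-1})/(h\hbar_J)$ and bound $|v(x_{J-1})|\le CN^{-aq}$. With $h=a\eps\ln N/(\beta QN)$ and $H\sim N^{-1}$, one gets $\eps^2/(hH^2)\sim \eps N^3/\ln N$, not $\eps N^2/\ln N$. More importantly, this piece carries the factor $\eps/h\sim N/\ln N$, which does not depend on $\eps$.

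Now take $\eps\sim N^{-1}$. Then $\rho_J=b(\xi)H/(2\eps)$ is bounded above and below, so $\sigma(\rho_J)$ is of order one, whichever side of your threshold this falls on. The piece is then of size $N^{2-aq}/\ln N$. Your remark that ``the factor $\eps^{-1}$ is at most $N$'' does not apply to it, since no $\eps^{-1}$ appears. Since $N^{2-aq}/\ln N\le CN^{-a}$ requires $a(q-1)\ge 2$, i.e.\ $a\ge 2a_*$, the step fails under the lemma's hypothesis $a\ge a_*$. The loss comes from dividing individual nodal values by the fine step: this costs $h^{-1}\sim N/(\eps\ln N)$, whereas a derivative bound costs only $\eps^{-1}$.

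\textbf{The fix (the paper's argument).} Never separate $v(x_{J-1})$ from $v(x_J)$. By the mean value theorem, $|D^\pm v(x_J)|\le\|v'\|_{[x_{J-1},x_{J+1}]}\le C\eps^{-1}e^{-b(0)x_{J-1}/\eps}$. Using only $\sigma\le1$ and $\hbar_J\ge H/2$, this gives
\[
\eps\sigma(\rho_J)\,|D''v(x_J)|\le \frac{2\eps}{\hbar_J}\|v'\|_{[x_{J-1},x_{J+1}]}\le CH^{-1}e^{-b(0)\xi/\eps}e^{b(0)h/\eps}\le CN^{1-aq}\le CN^{-a}
\]
for every $\eps$, with no regime splitting. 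Alternatively, $|D''v(x_J)|\le 2\|v''\|_{[x_{J-1},x_{J+1}]}$ combined with $\sigma(\rho_J)\le 1/\rho_J$ yields the same $CH^{-1}e^{-b(0)x_{J-1}/\eps}$.

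The continuous part is easier than you anticipate, and needs no pairing with the discrete terms. Since $\eps v''=-b(0)v'$ exactly, $\eps v''(\xi)+b(\xi)v'(\xi)=(b(\xi)-b(0))v'(\xi)$. This is bounded by $C(\xi/\eps)e^{-(b(0)-\beta)\xi/\eps}e^{-\beta\xi/\eps}\le CN^{-a}$ for all $\eps$.

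\textbf{Shorter coarse-part route.} For $J+1\le i\le N-1$ the stencil is uniform with step $H$. The same locality you invoke in the fine part therefore lets you apply \eqref{trunc_vKT} with $\chi=H$. This gives $C(H/\eps)^2e^{-\beta H/\eps}N^{-a}\le CN^{-a}$ at once, without using $a\ge a_*$; this is how the paper handles that range.
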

\begin{proof}
We consider $S_\lambda$ whenever the steps of the analysis apply equally to $S_N$ and $S_{1/\eps}$.

Let us first discuss $1 \le i \le J-1$. We use \eqref{trunc_vKT}, but in this case, $\chi$ is the step $h$ of the fine part of the mesh. Therefore,
\begin{eqnarray}
\label{trunc_v}
|\tau_i[v]| & \le & C \frac{\left( \eps N^{-1} \ln\lambda\right)^2}{\eps \left(\eps N^{-1} \ln\lambda + \eps\right)}e^{-\beta x_i/\eps}
\nonumber \\ 
 & = & C \frac{\left( N^{-1} \ln\lambda\right)^2}{N^{-1} \ln\lambda + 1}e^{-\beta x_i/\eps} \nonumber \\
& \le & C N^{-2} (\ln\lambda)^2 e^{-\beta x_i/\eps}.
\end{eqnarray}

When $J+1 \le i \le N-1$, an important fact we need is
\be
\label{eleft_coarse}
e^{-\beta x_{i}/\eps} \le e^{-\beta(\xi+H)/\eps} = \lambda^{-a}e^{-\beta H/\eps},
\ee
cf.\ \cite[proof of Lema 2]{VNfc}. We use \eqref{trunc_vKT} again, but now,
\[
|\tau_i[v]| \le C \frac{H^2}{\eps(H+\eps)}e^{-\beta x_i/\eps}.
\]
Then, from \eqref{eleft_coarse}, we have
\be
\label{trunc_v2}
|\tau_i[v]| \le C \frac{H^2}{\eps^2} \lambda^{-a} e^{-\beta H/\eps},
\ee
which implies
\be
\label{trunc_v3}
|\tau_i[v]| \le C \lambda^{-a}
\ee
since $H^2\eps^{-2}e^{-\beta H/\eps}\le C$.

We finally consider $x_i=x_J=\xi$. We use \eqref{deftrunc} with $g=v$, noting that
\be
\label{trunc_v4}
|\eps v''(\xi) + b(\xi)v'(\xi)| \le C \frac{\xi}{\eps} e^{-b(0)\xi/\eps} = C \frac{\xi}{\eps} e^{[\beta-b(0)]\xi/\eps} e^{-\beta \xi/\eps} 
\le C \lambda^{-a}. 
\ee
We begin estimating the remaining terms in \eqref{deftrunc} as follows:
\begin{eqnarray}
\label{trunc_v5}
|\eps\sigma(\rho_i)D''v(x_J) + b(x_J) D^+ v(x_J)| & \le & C\left( \frac{\eps}{\hbar} \| v'\|_{[x_{J-1},x_{J+1}]} 
+ \frac{1}{H} \| v\|_{[x_J,x_{J+1}]}\right) \nonumber \\ 
& \le & C H^{-1} e^{-b(0)x_{J-1}/\eps} \nonumber \\
& \le & C N e^{-b(0)\xi/\eps} e^{b(0)h/\eps} \nonumber \\
& \le & C N \lambda^{-aq+aq/J},
\end{eqnarray}
and finish on $S_N$ by setting $\lambda=N$ and using $a\ge a_*$:
\[
N \lambda^{-aq+aq/J}  = N^{1 - aq} e^{(aq\ln N)/(QN)}  \le  C N^{-a}.
\]
\end{proof}

\begin{lemma} 
\label{lem:trunc_vSE}
Let $b,c,f\in C^4(I)$ and \eqref{bc_cond} hold.
Then, the truncation error of the ASI scheme on the mesh $S_{1/\eps}$ with $a\ge 2$ satisfies
\[
|\tau_i[v]| \le C\left\{\begin{array}{ll}
(\ln \eps)^2 N^{-2} e^{-\beta x_i/\eps}, & 1\le i \le J-1,  \\
\mu N, & i=J, \\
\mu, & J+1\le i \le N-1,
\end{array}\right.
\]
where $\mu = \eps^{a-2}\min\left\{\eps^2, N^{-2}\right\}$ and $N$ is sufficiently large independently of $\eps$.
\end{lemma}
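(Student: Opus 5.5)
The plan is to follow the proof of Lemma \ref{lem:trunc_vSN} with $\lambda=\eps^{-1}$, so that $\xi=(a\eps/\beta)\ln(1/\eps)$, $h\le C\eps N^{-1}|\ln\eps|$, $H\le CN^{-1}$ and $e^{-\beta\xi/\eps}=\eps^{a}$. The three index ranges are treated separately. The point $i=J$ is the only place where a genuinely new argument is needed, namely a choice between two truncation-error bounds according to whether $\eps\le N^{-1}$ or $\eps>N^{-1}$.

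For $1\le i\le J-1$, the computation \eqref{trunc_v} already gives $C N^{-2}(\ln\lambda)^2e^{-\beta x_i/\eps}$, and this equals $C(\ln\eps)^2N^{-2}e^{-\beta x_i/\eps}$. For $J+1\le i\le N-1$, I would start from \eqref{trunc_v2}:
\[
|\tau_i[v]|\le C\eps^{a}\,\frac{H^2}{\eps^2}\,e^{-\beta H/\eps}.
\]
Bounding $H^2\eps^{-2}e^{-\beta H/\eps}\le C$ gives $C\eps^a=C\eps^{a-2}\eps^2$. Bounding instead $e^{-\beta H/\eps}\le 1$ and $H\le CN^{-1}$ gives $C\eps^{a-2}N^{-2}$. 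Taking the smaller of the two bounds yields $C\mu$.

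At $i=J$ I would produce two estimates for $|\tau_J[v]|$ and take the minimum.

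\emph{First estimate.} Combine \eqref{trunc_v4} and \eqref{trunc_v5}, which come from \eqref{deftrunc}. This gives
\[
|\tau_J[v]|\le C\Big(\frac{\xi}{\eps}e^{-b(0)\xi/\eps}+N e^{-b(0)\xi/\eps}e^{b(0)h/\eps}\Big).
\]
Here $e^{-b(0)\xi/\eps}=\eps^{aq}$ and $e^{b(0)h/\eps}\le \eps^{-aq/J}$. Write $\eps^{aq-aq/J}=\eps^{a}\,\eps^{a(q-1)-aq/J}$. For $N$ sufficiently large, independently of $\eps$, the exponent satisfies $a(q-1)-aq/J\ge a(q-1)/2>0$, so this factor is at most $\eps^a$. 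The logarithmic factor $\xi/\eps\le C|\ln\eps|$ is likewise absorbed by $\eps^{a(q-1)}|\ln\eps|\le C$. Hence $|\tau_J[v]|\le CN\eps^a$.

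\emph{Second estimate.} Use the first-order bound \eqref{trunc1} with $h_{J+1}=H$:
\[
|\tau_J[v]|\le CH\big(\eps\|v'''\|+\|v''\|\big)_{[x_{J-1},x_{J+1}]}\le CN^{-1}\eps^{-2}e^{-b(0)x_{J-1}/\eps}\le CN^{-1}\eps^{a-2},
\]
where the same absorption of $e^{b(0)h/\eps}$ is used as in the first estimate.

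Finally, if $\eps\le N^{-1}$ then $\mu N=N\eps^{a}$, and if $\eps>N^{-1}$ then $\mu N=N^{-1}\eps^{a-2}$. In either case $\min\{N\eps^a,\,N^{-1}\eps^{a-2}\}=\mu N$, which is the claim at $i=J$.

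The main obstacle is this transition point. A single bound does not give $\mu N$ in both regimes $\eps\le N^{-1}$ and $\eps>N^{-1}$, so the two complementary estimates must be combined. I also have to check carefully that the factor $e^{b(0)h/\eps}=\eps^{-aq/J}$ is dominated by the surplus $\eps^{a(q-1)}$. This is exactly where the requirement that $N$ be sufficiently large independently of $\eps$ enters.
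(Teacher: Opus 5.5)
Your proposal is correct and follows essentially the same route as the paper. You set $\lambda=\eps^{-1}$ in \eqref{trunc_v}, take the smaller of \eqref{trunc_v3} and the $\eps^{a-2}N^{-2}$ bound from \eqref{trunc_v2} on the coarse part, and at $i=J$ combine the bound $CN\eps^a$ from \eqref{trunc_v4}--\eqref{trunc_v5} with the bound $C\eps^{a-2}N^{-1}$ from \eqref{trunc1}, absorbing $\eps^{-aq/J}$ for $N$ large independently of $\eps$.
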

\begin{proof}
We use many general estimates on $S_\lambda$ from the proof of Lemma \ref{lem:trunc_vSN} and set $\lambda=\eps^{-1}$.
The estimate for $1\le i \le J-1$ follows from \eqref{trunc_v}. 
If $J+1 \le i \le N-1$, we use \eqref{trunc_v2}:
\[
|\tau_i[v]| \le C \frac{H^2}{\eps^2} \eps^{a} e^{-\beta H/\eps} \le C \eps^{a-2} N^{-2}.
\]
We combine this with \eqref{trunc_v3} and get the desired estimate.

In the remaining case, $i=J$, we start from \eqref{trunc_v5} as follows:
\[
|\eps\sigma(\rho_J)D''v(x_J) + b(x_J) D^+ v(x_J)| \le C N \eps^{aq-aq/J} \le  C N \eps^a
\]
where the last step is true if $J$ (that is, $N$) is sufficiently large independently of $\eps$.
This combined with \eqref{trunc_v4} gives
\be
\label{trunc_v6}
|\tau_J[v]| \le C N\eps^a.
\ee
We finish using \eqref{trunc1}:
\[
|\tau_J[v]| \le C N^{-1} \eps^{-2} e^{-b(0)x_{J-1}/\eps} \le C \eps^{a-2} N^{-1},
\]
where $N$ has to be sufficiently large again. This and \eqref{trunc_v6} complete the proof of the last estimate.
\end{proof}

\begin{remark}
\label{rem:lem_trunc_vSE}
If in the above proof of Lemma \ref{lem:trunc_vSE} in the case when $1\le i\le J-1$, we use \eqref{trunc_vKT} differently, we get
\[
|\tau_i[v]|  \le C \frac{h}{\eps}e^{-\beta x_i/\eps} \le C |\ln\eps|  N^{-1} \cdot e^{-\beta x_i/\eps}.
\]
Thus, if $a\ge 3$, we have that
\be
\label{trunc_vSE_lin}
|\tau_i[v]| \le C\left\{\begin{array}{ll}
|\ln \eps| N^{-1} e^{-\beta x_i/\eps}, & 1\le i \le J-1,  \\
\eps N^{-1}, & J+1\le i \le N-1.
\end{array}\right.
\ee
\end{remark}

Let us now consider the $y$-component of the solution. It would be ideal if we could prove the same kind of estimates
for $|\tau_i[y]|$ as for $|\tau_i[v]|$. However, there are some technical difficulties to do this, particularly on the mesh $S_N$ when $i=J$ 
and even when $i=J+1$. 

\begin{lemma} 
\label{lem:trunc_ySN}
Let $b,c,f\in C^4(I)$ and \eqref{bc_cond} hold.
Then, the truncation error of the ASI scheme on the mesh $S_N$ with $a\ge 1$ satisfies
\[
|\tau_i[y]| \le C \left\{\begin{array}{l}
\eps N^{-2}(\ln N)^2\left(\eps + \eps^{-1}e^{-\beta x_i/\eps}\right), \ \ 1\le i \le J-1, \\
\left( \eps N^{-1} +  N^{-(a+1)/2}\right), \ \ i=J,J+1, \\
\left(\min\{\eps, N^{-1}\}N^{-1} + N^{-a}\right), \ \ J+2\le i \le N-1.
\end{array}\right.
\]
\end{lemma}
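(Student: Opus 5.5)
We bound $\tau_i[y]$ with the three truncation-error estimates \eqref{trunc0}, \eqref{trunc}, \eqref{trunc1}, combined with the derivative bounds \eqref{dery} and the ASI property $|\sigma(\rho)+\rho-1|\le\min\{\rho,\rho^2\}$ (i.e.\ $\bar C=1$). Throughout, $\xi=(a\eps/\beta)\ln N$, so $e^{-\beta x_i/\eps}\le N^{-a}$ for $i\ge J$, $h\le C\eps N^{-1}\ln N$, and $H\le CN^{-1}$. We split into the three ranges of $i$ in the statement.

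\emph{Fine mesh, $1\le i\le J-1$.} Here $h_i=h_{i+1}=h$, so the term $(h_{i+1}-h_i)|y'''(x_i)|$ in $E^2_i$ vanishes. We apply \eqref{trunc} with $\rho_i=b(x_i)h/(2\eps)\le CN^{-1}\ln N$ and the bound $\min\{\rho,\rho^2\}\le\rho_i^2$:
\[
E^0_i[y]\le \eps\rho_i^2|y''(x_i)|\le C\eps N^{-2}(\ln N)^2\left(\eps+\eps^{-1}e^{-\beta x_i/\eps}\right).
\]
For the remaining terms, $E^1_i\le h^2\|y'''\|$ and $E^2_i\le \eps h^2\|y^{(4)}\|$. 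Using \eqref{dery} with $j=3,4$ gives $h^2(\eps+\eps^{-2}e^{-\beta x_{i}/\eps})$ and $\eps h^2(\eps+\eps^{-3}e^{-\beta x_{i-1}/\eps})$. We use $h^2\le C\eps^2N^{-2}(\ln N)^2$ and $e^{\beta h/\eps}\le C$, since $h/\eps\le C$. Both terms are then bounded by $C\eps N^{-2}(\ln N)^2(\eps+\eps^{-1}e^{-\beta x_i/\eps})$, and the smooth part $\eps h^2$ is even smaller.

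\emph{Coarse mesh, $J+2\le i\le N-1$.} Now $x_{i-1}\ge\xi+H$ and all steps equal $H$. We use two bounds and take the smaller. First, \eqref{trunc1} gives $CH(\eps\|y'''\|+\|y''\|)\le CN^{-1}(\eps+\eps^{-1}N^{-a}e^{-\beta H/\eps})$. Second, \eqref{trunc0} gives $C(\eps\|y''\|+\|y'\|)\le C(\eps+N^{-a})$. Taking the minimum of the $\eps$-parts yields $\min\{\eps,N^{-1}\}N^{-1}$ when combined with the first bound. The layer part remaining from \eqref{trunc1} is $N^{-1}\eps^{-1}N^{-a}e^{-\beta H/\eps}$. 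We control it with $(H/\eps)e^{-\beta H/\eps}\le C$, so $N^{-1}\eps^{-1}\le CH/\eps$ gives $N^{-a}$. If instead $H\le C\eps$, the direct estimate \eqref{trunc0} already gives $N^{-a}$ for the layer part.

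\emph{Transition points $i=J,J+1$ (main obstacle).} Here the stencil straddles $\xi$: it contains the fine step $h$ and the coarse step $H$, and $x_{i-1}$ may be as small as $\xi-h$. Second-order estimates fail because $h_{i+1}-h_i=H-h$ is not small. I would split $y$ into its smooth part (derivatives bounded by $C\eps$) and its layer part (bounded by $\eps^{1-j}e^{-\beta x/\eps}$).

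For the smooth part, \eqref{trunc1} gives $CH\cdot\eps=C\eps N^{-1}$.

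For the layer part, I would interpolate between two estimates, which is where the exponent $(a+1)/2$ comes from. Estimate \eqref{trunc0} gives about $N^{-a}\cdot\max\{1,\eps^{-1}\cdot\eps\}$, i.e.\ $C N^{-a}$ times factors involving $e^{\beta h/\eps}\le C$. Estimate \eqref{trunc1} gives about $H\eps^{-1}N^{-a}$. The geometric mean $\sqrt{N^{-a}\cdot N^{-1}\eps^{-1}N^{-a}}$ does not close directly, so I would case-split on $\eps$ versus $N^{-1}$:
\begin{itemize}
\item if $\eps\ge N^{-(a-1)/2}$ (roughly), then \eqref{trunc1} gives $N^{-1-a}\eps^{-1}\le N^{-(a+1)/2}$;
\item otherwise, \eqref{trunc0} applied to the first-derivative term $\|y'\|\le C(\eps+N^{-a})$ and to $\eps\|y''\|$ gives a bound involving $\eps$, which is then $\le N^{-(a+1)/2}$.
\end{itemize}
The delicate point is checking that the $\eps^{-1}$ factors in $\eps\|y''\|$ cancel correctly. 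I expect this balancing at $i=J,J+1$, with exact exponent bookkeeping, to be the hardest step. At $i=J+1$ the term $\|y'\|_{[x_J,x_{J+2}]}$ is evaluated from $\xi$, so it carries $N^{-a}$ without the extra factor $e^{-\beta H/\eps}$; this is why $J+1$ is grouped with $J$.
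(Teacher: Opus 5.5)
Your fine-mesh argument for $1\le i\le J-1$ is correct and is the paper's. The genuine gap is in the range $J+2\le i\le N-1$. Estimates \eqref{trunc0} and \eqref{trunc1} are at best first order: their smooth contributions are $C\eps$ and $C\eps N^{-1}$. The minimum of these is $\eps N^{-1}$, not $\min\{\eps,N^{-1}\}N^{-1}=\min\{\eps N^{-1},N^{-2}\}$. So what you actually prove there is $C(\eps N^{-1}+N^{-a})$, which is weaker than the claim whenever $\eps>N^{-1}$.

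The $N^{-2}$ can only come from the second-order estimate \eqref{trunc}, used as you did on the fine mesh. Since $h_i=h_{i+1}=H$, the difference term in $E^2_i$ drops out. With $|\sigma(\rho)+\rho-1|\le\min\{\rho,\rho^2\}$ you get, for $s=1,2$,
$E^0_i[y]\le C\eps\rho_i^s(\eps+\eps^{-1}e^{-\beta x_i/\eps})\le C\bigl(H^s\eps^{2-s}+N^{-a}(H/\eps)^s e^{-\beta H/\eps}\bigr)$,
and also $E^1_i[y]+E^2_i[y]\le C\bigl(\eps H^2+N^{-a}(H/\eps)^2 e^{-\beta H/\eps}\bigr)$. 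Then $(H/\eps)^s e^{-\beta H/\eps}\le C$ gives the claim. Absorbing $(H/\eps)^2$ in $E^2_i$ requires $e^{-\beta x_{i-1}/\eps}\le N^{-a}e^{-\beta H/\eps}$, i.e.\ $x_{i-1}\ge\xi+H$, because $\|y^{(4)}\|$ is taken over $[x_{i-1},x_{i+1}]$. That is why $i=J+1$ is excluded from this range. It is not because of the $\|y'\|$ term of \eqref{trunc0}, which lives on $[x_i,x_{i+1}]$.

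At $i=J,J+1$ you have the right tools: \eqref{trunc1} versus \eqref{trunc0}, together with $e^{-\beta x_{i-1}/\eps}\le e^{-\beta(\xi-h)/\eps}\le CN^{-a}$. However, your case split does not close. For $\eps<N^{-(a-1)/2}$, \eqref{trunc0} gives only $C(\eps+N^{-a})$, and $\eps$ may be as large as $N^{-(a-1)/2}\gg N^{-(a+1)/2}$.

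The terms to balance are not the two layer contributions. They are the layer term $\eps^{-1}N^{-a-1}$ of \eqref{trunc1} and the smooth term $\eps$ of \eqref{trunc0}, whose geometric mean is exactly $N^{-(a+1)/2}$. So split at $\eps=N^{-(a+1)/2}$:
\emph{for $\eps\ge N^{-(a+1)/2}$}, \eqref{trunc1} gives $C(\eps N^{-1}+\eps^{-1}N^{-a-1})\le C(\eps N^{-1}+N^{-(a+1)/2})$;
\emph{for $\eps\le N^{-(a+1)/2}$}, \eqref{trunc0} gives $C(\eps+N^{-a})\le CN^{-(a+1)/2}$, using $a\ge 1$ for $N^{-a}\le N^{-(a+1)/2}$.

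You do not need to split $y$ itself into smooth and layer parts, and \eqref{dery} does not supply such a splitting anyway. Each estimate is applied to all of $y$ in its own range of $\eps$.
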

\begin{proof}
We consider the general mesh $S_\lambda$ whenever the resulting estimate can be useful later.
The derivative estimates \eqref{dery} are needed throughout the proof.

Let $1\le i \le J-1$. We handle the first component $E^0_i[y]$ of the truncation error in \eqref{trunc} using 
$|\sigma(\rho) + \rho - 1| \le \bar C\rho^2$ from \eqref{sigma2},
\begin{eqnarray*}
E^0_i[y] & \le & C\eps \rho_i^2\left(\eps + \eps^{-1}e^{-\beta x_i/\eps}\right) \\
& \le & C h^2 \eps^{-2} \left(\eps^2 + e^{-\beta x_i/\eps}\right) \\
& \le & C N^{-2}(\ln\lambda)^2 \left(\eps^2 + e^{-\beta x_i/\eps}\right).
\end{eqnarray*}
Next,
\begin{eqnarray*} 
E^1_i[y] & \le & C h^2 \left(\eps + \eps^{-2}e^{-\beta x_i/\eps}\right) \\
& \le & C N^{-2} (\ln\lambda)^2 \left(\eps^3 + e^{-\beta x_i/\eps}\right).
\end{eqnarray*} 
We finally use 
\be
\label{eleft}
e^{-\beta x_{i-1}/\eps} = e^{-\beta x_i/\eps} e^{\beta h/\eps} = \lambda^{a/J} e^{-\beta x_i/\eps}
\ee
to get
\begin{eqnarray*} 
E^2_i[y] & \le & C \eps h^2 \left(\eps + \eps^{-3}e^{-\beta x_{i-1}/\eps}\right) \\
& \le & C N^{-2} (\ln\lambda)^2 \left(\eps^4 + \lambda^{a/J} e^{-\beta x_i/\eps}\right).
\end{eqnarray*}
Therefore, from \eqref{trunc}, we have
\be
\label{trunc_yleft}
|\tau_i[y]| \le C N^{-2} (\ln\lambda)^2 \left(\eps^2 + \lambda^{a/J}e^{-\beta x_i/\eps}\right), \ \ 1\le i \le J-1.
\ee
Setting $\lambda = N$, we complete this part of the proof since $N^{a/J}\le C$.

Let us now consider $J+2\le i\le N-1$. We use the truncation error in the form of \eqref{trunc} in this case as well. 
We have
\begin{eqnarray*}
E^0_i[y] & \le & C\eps \rho_i^s \left(\eps + \eps^{-1}e^{-\beta x_i/\eps}\right) \nonumber \\
& \le & C H^s \eps^{-s} \left(\eps^2 + \lambda^{-a}e^{-\beta H/\eps}\right) \ \ \text{(because of \eqref{eleft_coarse})} \nonumber \\
& \le & C \left(H^s\eps^{2-s} + \bar E^s  \right),
\end{eqnarray*}
where
\[ 
\bar E^s := \lambda^{-a}\left(\frac{H}{\eps}\right)^s e^{-\beta H/\eps}
\]
and $s=1$ or $s=2$. It holds that
\[
H^s\eps^{2-s} \le C\left\{\begin{array}{ll}
\eps N^{-1} & \text{if } \ s=1, \\
 N^{-2} & \text{if } \ s=2, 
\end{array}\right.
\]
which implies
\be
\label{y_coarse0}
E^0_i[y] \le C \min\left \{\eps N^{-1} + \bar E^1,\ N^{-2} + \bar E^2 \right\}.
\ee
Also,
\begin{eqnarray}
\label{y_coarse1}
E^1_i[y] & \le & CH^2 \left( \eps + \eps^{-2} \lambda^{-a} e^{-\beta H/\eps}\right) \nonumber \\
& \le & C\left( \eps N^{-2} + \bar E^2 \right)
\end{eqnarray}
and
\begin{eqnarray}
\label{y_coarse2}
E^2_i[y] & \le & C \eps H^2\left(\eps + \eps^{-3}e^{-\beta x_{i-1}/\eps}\right) \nonumber \\ 
& \le & C \left( \eps^2 N^{-2} + \bar E^2 \right). 
\end{eqnarray}
From this point in the proof, we only consider $S_N$. When we set $\lambda=N$, we get $\bar E^s \le N^{-a}$, 
$s=1,2$, and the desired estimate follows from \eqref{y_coarse0}, \eqref{y_coarse1}, and \eqref{y_coarse2}. 

Finally, let $i=J,J+1$. Instead of \eqref{eleft_coarse}, we now only have
\[
e^{-\beta x_{i-1}/\eps} \le e^{-\beta(\xi-h)/\eps} \le CN^{-a}.
\]
Suppose
\be
\label{epsN}
\eps \ge N^{-(a+1)/2},
\ee
in which case we use \eqref{trunc1} to estimate the truncation error. We have
\[
|\tau_i[y]| \le C N^{-1}\left[ \eps\left(\eps + \eps^{-2}N^{-a}\right) + \eps + \eps^{-1}N^{-a}\right].
\]
This and \eqref{epsN} imply the desired estimate:
\[
|\tau_i[y]| \le C \left( \eps N^{-1} + \eps^{-1}N^{-a-1}\right) \le C \left( \eps N^{-1} + N^{-(a+1)/2}\right).
\]

It remains to consider $i=J,J+1$ when $\eps\le N^{-(a+1)/2}$. We use \eqref{trunc0} to get
\[
|\tau_i[y]| \le C\left(\eps + N^{-a}\right) \le C \left( N^{-(a+1)/2} + N^{-a}\right) \le C N^{-(a+1)/2} \ \ \text{(since $a\ge 1$)}.
\]
\end{proof}

\begin{remark}
\label{rem:trunc_ySN}
It is possible to use \eqref{trunc} instead of \eqref{trunc1} in the above proof of the estimate for $i=J,J+1$, and the result is 
somewhat different from that in Lemma \ref{lem:trunc_ySN}. If $a\ge 2$, then
\be
\label{trunc_ySNrem}
|\tau_i[y]| \le C\left( \min\{\eps, N^{-1}\}N^{-1} + \eps^2 N^{-1 + J-i}  + N^{-(a+2)/3}\right)
\ee
for $i=J,J+1$. When we compare this to the estimate in Lemma \ref{lem:trunc_ySN}, we see that $\min\{\eps, N^{-1}\}N^{-1} \le \eps N^{-1}$ 
and $\eps^2 N^{-1 + J-i} \le \eps N^{-1}$, but $N^{-(a+2)/3} \ge N^{-(a+1)/2}$. Moreover, in practical situations, we usually have
$\eps\le N^{-1}$, thus we can expect the estimate in \eqref{trunc_ySNrem} to behave like $C\left( \eps N^{-1} + N^{-(a+2)/3}\right)$,
and the estimate in Lemma \ref{lem:trunc_ySN} behaves better.
\end{remark}

Let us now consider $\tau_i[y]$ on the mesh $S_{1/\eps}$. We only provide a full analysis leading to an estimate in the form \eqref{trunc_pre}
suitable for the preconditioning approach. 

\begin{remark}
\label{rem:barSE}
The barrier-function technique does not give satisfactory results on $S_{1/\eps}$.
The factor $\kappa = 1+|\ln\eps|N^{-1}$ in the error-estimate of Theorem \ref{thm:bar} is one reason for this. 
Another problem is that \eqref{eleft} becomes
\[
e^{-\beta x_{i-1}/\eps} = \eps^{-a/J} e^{-\beta x_i/\eps}
\]
and gives
\[
|\tau_i[y]| \le C \eps^{1-a/J}(\ln\eps)^2 N^{-2}\left(\eps^{1+a/J} + \eps^{-1}e^{-\beta x_i/\eps}\right), \ \ 1\le i \le J-1,
\]
which is worse than what we have in Lemma \ref{lem:trunc_ySN}.
\end{remark}

\begin{lemma} 
\label{lem:trunc_ySE}
Let $b,c,f\in C^4(I)$ and \eqref{bc_cond} hold.
Then, the truncation error of the ASI scheme on the mesh $S_{1/\eps}$ with $a> 3$ satisfies
\[
|\tau_i[y]| \le C \left\{\begin{array}{l}
(\ln\eps)^2 N^{-2}, \ \ 1\le i \le J-1, \\
\left( \min\{\eps, N^{-1}\}N^{-1} + \eps^2 N^{-1} \right), \ \ i=J, \\
\min\{\eps, N^{-1}\}N^{-1}, \ \ J+1\le i \le N-1,
\end{array}\right.
\]
provided $N$ is sufficiently large independently of $\eps$.
\end{lemma}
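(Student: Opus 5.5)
We prove the three estimates separately, taking $\lambda=\eps^{-1}$ in the general $S_\lambda$ computations from the proof of Lemma \ref{lem:trunc_ySN}, so that $\xi=(a\eps/\beta)|\ln\eps|$, $h\le C\eps N^{-1}|\ln\eps|$ and $e^{-\beta\xi/\eps}=\eps^a$. Throughout we use the derivative bounds \eqref{dery}.

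\emph{Fine part, $1\le i\le J-1$.} We start from \eqref{trunc_yleft} with $\lambda=\eps^{-1}$:
\[
|\tau_i[y]|\le C N^{-2}(\ln\eps)^2\left(\eps^2+\eps^{-a/J}e^{-\beta x_i/\eps}\right).
\]
Unlike on $S_N$, the factor $\eps^{-a/J}$ is not uniformly bounded. However, for $i\ge1$ we have $e^{-\beta x_i/\eps}\le e^{-\beta h/\eps}=\eps^{a/J}$, so $\eps^{-a/J}e^{-\beta x_i/\eps}\le1$. This gives $(\ln\eps)^2N^{-2}$ directly, without any exponential factor. That is exactly the form \eqref{trunc_pre} requires, and it explains why the exponential is dropped here.

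\emph{Coarse part, $J+1\le i\le N-1$.} Here $x_{i-1}\ge\xi$, so $e^{-\beta x_{i-1}/\eps}\le\eps^a$ and \eqref{eleft_coarse} holds. The bounds \eqref{y_coarse0}, \eqref{y_coarse1} and \eqref{y_coarse2} remain valid; the only change is that the $E^2_i$ bound now involves $x_{i-1}$, which still lies in the coarse region or at $\xi$. This leaves
\[
\bar E^s=\eps^a(H/\eps)^s e^{-\beta H/\eps}.
\]
For $s=1$ we bound $\bar E^1\le\eps^a\cdot C\le \eps^{a-1}\cdot\eps N^{-1}\cdot CN$. A cleaner route is to note $(H/\eps)^s e^{-\beta H/\eps}\le C$, so $\bar E^s\le C\eps^a$. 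Then we need $\eps^a\le C\min\{\eps,N^{-1}\}N^{-1}$.
\begin{itemize}
\item If $\eps\le N^{-1}$, then $\eps^a\le\eps^2\cdot\eps^{a-2}\le \eps N^{-1}$, since $a>3\ge2$.
\item If $\eps>N^{-1}$, we instead use $\bar E^2\le\eps^a H^2\eps^{-2}\le C\eps^{a-2}N^{-2}\le CN^{-2}$.
\end{itemize}
Combining with \eqref{y_coarse0}, whose first term gives $\min\{\eps N^{-1},N^{-2}\}$, and with $\eps N^{-2}$ and $\eps^2N^{-2}$ from $E^1_i$ and $E^2_i$, which are both at most $\min\{\eps,N^{-1}\}N^{-1}$, yields the claim.

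\emph{Transition point, $i=J$.} This is the main obstacle, because $x_{J-1}=\xi-h$ and $h_J\ne h_{J+1}$. We use \eqref{trunc}. The term $E^2_J$ contains $\eps(H-h)|y'''(\xi)|\le C\eps N^{-1}(\eps+\eps^{-2}\eps^a)$, which gives the $\eps^2N^{-1}$ term plus $\eps^{a-1}N^{-1}$. The latter is harmless for $a>3$, since $\eps^{a-1}\le \eps^2$ and $\eps^{a-1}N^{-1}\le\min\{\eps,N^{-1}\}N^{-1}$ once we also use $\eps^{a-1}\le\eps^2\le\ldots$ in the appropriate regime. The $h_{i+1}^2\|y^{(4)}\|$ part involves $e^{-\beta x_{J-1}/\eps}=\eps^{a-a/J}$, which is bounded by $C\eps^{a-1/2}$, say, for $N$ sufficiently large independently of $\eps$. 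This is where the hypotheses $a>3$ and large $N$ enter: they give
\[
\eps H^2\eps^{-3}\eps^{a-1/2}\le C\eps^{a-5/2}N^{-2}\le CN^{-2}\eps^{1/2}.
\]
We then have to check that this is bounded by $\min\{\eps,N^{-1}\}N^{-1}$, and the exponent bookkeeping must be arranged so that it is. The terms $E^0_J$ and $E^1_J$ are handled exactly as in the coarse case, since they only involve points at or to the right of $\xi$. Collecting all contributions gives $C(\min\{\eps,N^{-1}\}N^{-1}+\eps^2N^{-1})$.
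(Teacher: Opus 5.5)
\textbf{Gap at the transition point $i=J$.} Your route is the paper's route: \eqref{trunc} with the bounds \eqref{dery}, the exponential dropped on the fine part, \eqref{y_coarse0}--\eqref{y_coarse2} with $\lambda=\eps^{-1}$ on the coarse part, and $e^{-\beta x_{J-1}/\eps}=\eps^{a-a/J}$ at $x_J$. Your fine-part step, via $\eps^{-a/J}e^{-\beta x_i/\eps}=e^{-\beta x_{i-1}/\eps}\le 1$, is correct.

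The gap is the step at $i=J$ that you leave open. It cannot be repaired by bookkeeping after your choice, because bounding $\eps^{a-a/J}\le\eps^{a-1/2}$ already loses too much. That choice gives $\eps h_{J+1}^2\|y^{(4)}\|_{[x_{J-1},x_{J+1}]}\le C\eps^{a-5/2}N^{-2}$. When $\eps\le N^{-1}$ the target is of order $\eps N^{-1}$, and the ratio $\eps^{a-7/2}N^{-1}$ is unbounded as $\eps\to0$ whenever $3<a<7/2$. Your argument therefore only works for $a\ge 7/2$. The further relaxation to $CN^{-2}\eps^{1/2}$ is insufficient for every $a$, since its ratio to the target is $N^{-1}\eps^{-1/2}$.

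The missing idea is to spend the strictness of $a>3$ on this exponent. Since $J=QN$, for $N\ge a/(Q(a-3))$ you have $a-a/J\ge 3$. Hence $e^{-\beta x_{J-1}/\eps}\le\eps^{3}$ and
\[
\eps H^2\left(\eps+\eps^{-3}\eps^{a-a/J}\right)\le C\eps N^{-2}\le C\min\{\eps,N^{-1}\}N^{-1}.
\]
This is exactly where the paper uses both hypotheses: $a>3$ and $N$ large independently of $\eps$.

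\textbf{Two smaller slips.}
\begin{itemize}
\item Your ``cleaner route'' $\bar E^s\le C\eps^a$ relies on the factor $e^{-\beta H/\eps}$. That factor is absent from the fourth-derivative part of $E^2_{J+1}$, where $x_{i-1}=\xi$. It is also absent from $E^0_J$ and $E^1_J$, which involve $y''(\xi)$ and $\|y'''\|_{[\xi,x_{J+1}]}$. So ``exactly as in the coarse case'' must use the other bound, $\bar E^s\le C\eps^{a-s}N^{-s}$, as the paper does. Then $a\ge 3$ gives $\eps^{a-2}N^{-2}\le \eps N^{-2}$ for those pieces.
\item The claim $\eps^{a-1}N^{-1}\le\min\{\eps,N^{-1}\}N^{-1}$ is false when $\eps>N^{-1}$. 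You do not need it, because $\eps^{a-1}N^{-1}\le \eps^2N^{-1}$ is already covered by the stated bound at $i=J$.
\end{itemize}
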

\begin{proof}
When $1\le i \le J-1$, we use \eqref{trunc} like in the proof of Lemma \ref{lem:trunc_ySN} to get
\[
E^0_i[y]  \le  C\eps \rho_i^2\left(\eps + \eps^{-1}\right) \le C h^2 \eps^{-2} \le  C (\ln\eps)^2 N^{-2},
\]
\[
E^1_i[y]  \le  C h^2 \left(\eps + \eps^{-2}\right) \le C (\ln\eps)^2 N^{-2},
\]
and
\[
E^2_i[y]  \le  C \eps h^2 \left(\eps + \eps^{-3}\right) \le C (\ln\eps)^2 N^{-2}.
\]

For $J+2\le i \le N-1$, we set $\lambda=\eps^{-1}$ in \eqref{y_coarse0}, \eqref{y_coarse1}, and \eqref{y_coarse2}. Then the desired estimate 
follows from
\[
\bar E^s = \eps^{a}\left(\frac{H}{\eps}\right)^s e^{-\beta H/\eps} \le C \eps^{a-s} N^{-s}, \ \ s=1,2.
\]
Since the above $e^{-\beta H/\eps}$-factor is not needed on $S_{1/\eps}$, we can also treat $i=J+1$ like above.
This leaves us with $i=J$. 

When $x_i=x_J=\xi$, we proceed with \eqref{trunc} and use
\[
e^{-\beta \xi/\eps} = \eps^a \ \text{ and } \ e^{-\beta x_{J-1}/\eps}=e^{-\beta \xi/\eps}e^{\beta h/\eps}=\eps^{a-a/J}.
\]
It holds that
\begin{eqnarray*}
E^0_J[y] & \le & C\eps \rho_J^s \left(\eps + \eps^{-1}\eps^{a}\right) \\
& \le & C N^{-s}\left(\eps^{2-s} + \eps^{a-s}\right) \\ 
& \le & C\eps^{2-s}N^{-s}, \ \ s=1,2.
\end{eqnarray*}
Therefore,
\be
\label{E0_yJ}
E^0_J[y] \le C \min\{\eps, N^{-1}\}N^{-1}.
\ee
We also have
\be
\label{E1_yJ}
E^1_J[y] \le C N^{-2} \left(\eps + \eps^{-2}\eps^{a}\right) \le  C \eps N^{-2}
\ee
and
\[
E^2_J[y] \le C\eps\left[ N^{-1}\left( \eps + \eps^{-2} \eps^{a}\right) + N^{-2}\left(\eps + \eps^{-3}\eps^{a-a/J}\right)\right].
\]
Since $a- \frac{a}{J} - 3\ge 0$ if $N$ is sufficiently large, it follows that
\be
\label{E2_yJ}
E^2_J[y]  \le  C\left( \eps^2 N^{-1} + \eps N^{-2}\right). 
\ee
Then, \eqref{E0_yJ}, \eqref{E1_yJ},  and \eqref{E2_yJ} imply the estimate for $i=J$.
\end{proof}

\begin{remark}
\label{rem:lem_trunc_ySE}
If in the above proof we only use \eqref{trunc1} to estimate $\tau_i[y]$, we need $a>2$ to get
\be
\label{trunc_ySE_lin}
|\tau_i[y]| \le C \left\{\begin{array}{l}
|\ln\eps| N^{-1}, \ \ 1\le i \le J-1, \\
\eps N^{-1}, \ \ J\le i \le N-1,
\end{array}\right.
\ee
This follows from
\[
|\tau_i[y]|  \le  C h \left( \eps + \eps^{-1}\right) \le C |\ln\eps| N^{-1}, \ \ 1\le i\le J-1,
\]
and
\begin{eqnarray*}
|\tau_i[y]|  & \le &  C N^{-1}\left( \eps + \eps^{-1}e^{-\beta x_{i-1}/\eps} \right) \\
& \le & C N^{-1}\left( \eps + \eps^{a-1-a/J} \right) \le C\eps N^{-1}, \ \ J \le i \le N-1.
\end{eqnarray*}
\end{remark}

\section{The Main Theoretical Results}
\label{sec:main}

In this section, we derive estimates of $|\tau_i[w]|$ from $\tau_i[w] = \tau_i[v] + \tau_i[y]$ and Lemmas 
\ref{lem:trunc_vSN}--\ref{lem:trunc_ySE}. Then we use Theorems \ref{thm:bar} and \ref{thm:pre} to estimate the error of our numerical 
method.

\begin{thm}
\label{thm:main_SN}
Let $b,c,f\in C^4(I)$ and \eqref{bc_cond} hold. Further, let $w$ be the solution of the problem \eqref{wprob} and 
let $W^N$ be the solution of the discrete problem \eqref{discr_w} on the mesh $S_N$ with 
$a\ge \min\left\{ 3, \max\{1, a_*\}\right\}$. Then it holds that
\[
\|w^N - W^N \| \le C\nu_1, 
\]
where
\[
\nu_1 = \max\left\{ \eps N^{-2}(\ln N)^2,\ \eps N^{-1} + N^{-(a+1)/2}\right\}.
\]
\end{thm}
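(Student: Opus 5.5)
The plan is to combine the truncation-error estimates of Lemmas \ref{lem:trunc_vSN} and \ref{lem:trunc_ySN} into a bound of the form \eqref{trunc_bar}, and then apply Theorem \ref{thm:bar} with $\lambda=N$, so that $\kappa=1$. Since $w=y+v$ and the scheme is linear, $\tau_i[w]=\tau_i[v]+\tau_i[y]$. I will therefore bound each piece on the three index ranges $1\le i\le J-1$, $i=J$ (together with $i=J+1$), and $J+2\le i\le N-1$.

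\textbf{Fine part, $1\le i\le J-1$.} Lemma \ref{lem:trunc_vSN} gives
\[
|\tau_i[v]|\le CN^{-2}(\ln N)^2e^{-\beta x_i/\eps}=C\,\eps N^{-2}(\ln N)^2\cdot\eps^{-1}e^{-\beta x_i/\eps}.
\]
Lemma \ref{lem:trunc_ySN} gives $|\tau_i[y]|\le C\eps N^{-2}(\ln N)^2(\eps+\eps^{-1}e^{-\beta x_i/\eps})$. Adding these, \eqref{trunc_bar} holds with $\eta_1=\eps N^{-2}(\ln N)^2$.

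\textbf{Coarse part, $J+2\le i\le N-1$.} The bounds are of order $\min\{\eps,N^{-1}\}N^{-1}+N^{-a}$. Since $a\ge1$ gives $N^{-a}\le N^{-(a+1)/2}$, this is at most $C(\eps N^{-1}+N^{-(a+1)/2})$.

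\textbf{Points $i=J,J+1$.} Lemma \ref{lem:trunc_ySN} already gives $C(\eps N^{-1}+N^{-(a+1)/2})$ for $y$. So on all of $J\le i\le N-1$ we may take $\eta_2=\eps N^{-1}+N^{-(a+1)/2}$. Theorem \ref{thm:bar} then yields $\|w^N-W^N\|\le C\max\{\eta_1,\eta_2\}=C\nu_1$.

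The main obstacle is the hypothesis on $a$. It is $a\ge\min\{3,\max\{1,a_*\}\}$, which need not imply $a\ge a_*$, whereas Lemma \ref{lem:trunc_vSN} needs $a\ge a_*$. There are two cases.

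\emph{Case $a\ge\max\{1,a_*\}$.} Both lemmas apply directly. The $v$-bound at $i\ge J$ is $N^{-a}\le N^{-(a+1)/2}$.

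\emph{Case $3\le a<a_*$.} I will redo only the $v$-estimate for $i\ge J$ using the proof of Lemma \ref{lem:trunc_vSN}. For $J+1\le i\le N-1$, the bound \eqref{trunc_v3}, namely $|\tau_i[v]|\le CN^{-a}$, did not use $a\ge a_*$. At $i=J$, \eqref{trunc_v4} gives $CN^{-a}$, and \eqref{trunc_v5} gives $CN^{1-aq+aq/J}\le CN^{1-aq}$. It then remains to check $aq-1\ge (a+1)/2$, i.e.\ $a(q-\tfrac12)\ge\tfrac32$. This holds for $a\ge3$ and $q>1$, because $3q-\tfrac32>\tfrac32$. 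Hence $|\tau_J[v]|\le CN^{-(a+1)/2}$, which fits into $\eta_2$.

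The rest is bookkeeping of the maxima.
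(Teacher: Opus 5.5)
Your proposal is correct and follows the paper's proof. Like the paper, you combine Lemmas \ref{lem:trunc_vSN} and \ref{lem:trunc_ySN} into the form \eqref{trunc_bar} with $\eta_1=\eps N^{-2}(\ln N)^2$ and $\eta_2=\eps N^{-1}+N^{-(a+1)/2}$, re-estimate $\tau_J[v]$ via \eqref{trunc_v4}--\eqref{trunc_v5} when $a\ge 3$ but possibly $a<a_*$, and apply Theorem \ref{thm:bar} with $\kappa=1$. The only cosmetic difference is at $i=J$: the paper uses the cruder bound $CN^{1-a}$ (replacing $b(0)$ by $\beta$) and checks $N^{1-a}\le N^{-(a+1)/2}$ for $a\ge 3$, whereas you keep $CN^{1-aq}$ and check $a(q-\tfrac12)\ge\tfrac32$.
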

\begin{proof}
We show first that
\be
\label{trunc_wSN}
|\tau_i[w]| \le C \left\{\begin{array}{l}
\eps N^{-2}(\ln N)^2\left(\eps + \eps^{-1}e^{-\beta x_i/\eps}\right), \ \ 1\le i \le J-1, \\
\left( \eps N^{-1} + N^{-(a+1)/2}\right), \ \ i= J,J+1, \\
\left(\min\{\eps, N^{-1}\}N^{-1} + N^{-a}\right), \ \ J+2\le i \le N-1.
\end{array}\right.
\ee
If $a\ge \max\{1, a_*\}$, we get the desired estimate by combining Lemmas \ref{lem:trunc_vSN} and \ref{lem:trunc_ySN}. 
Note that when $i=J,J+1$, we use $N^{-a}\le N^{-(a+1)/2}$, which is true because of $a\ge 1$. 
If, on the other hand, $a\ge 3$, then $a\ge a_*$ is not necessarily satisfied, and the technique used to prove the 
estimate $\tau_J[v]\le C N^{-a}$ in Lemma \ref{lem:trunc_vSN} no longer works. However, we have that
\[
\tau_J[v]\le C N^{1-a}
\]
because of
\[
|\eps\sigma(\rho_J)D''v(x_J) + b(x_J) D^+ v(x_J)| \le C H^{-1} e^{-b(0)\xi/\eps} \le C N e^{-\beta\xi/\eps} = CN^{1-a}.
\]
Then, $a\ge 3$ implies that $N^{1-a}\le N^{-(a+1)/2}$.

Therefore, we have a truncation-error estimate in the form of \eqref{trunc_bar} with
\[
\eta_1 = \eps N^{-2}(\ln N)^2 \ \text{ and } \ \eta_2 = \eps N^{-1} + N^{-(a+1)/2}.
\]
The assertion then follows from Theorem \ref{thm:bar}.
\end{proof}

\begin{remark}
\label{rem:max-sum}
Since it holds that
\[
\max\{A, B\} \le A + B \le 2\max\{A, B\}
\]
for any two non-negative quantities $A$ and $B$, the error estimate $\nu_1$ in Theorem \ref{thm:main_SN} can also be expressed as
\[
\nu_1 = \max\left\{ \eps N^{-2}(\ln N)^2,\ \eps N^{-1},\  N^{-(a+1)/2}\right\}
\]
or
\[
\nu_1 = \eps N^{-2}(\ln N)^2 + \eps N^{-1} + N^{-(a+1)/2}.
\]
In general, we keep the form of the error estimate that results directly from the corresponding truncation-error estimate in Section \ref{sec:trunc}.
\end{remark}

\begin{remark}
\label{rem:main_SN}
Since $\eps\le N^{-1}$ usually holds in practice, the error estimates in Theorems \ref{thm:main_old} and 
\ref{thm:main_SN} typically behave like $C N^{-2}(\ln N)^2$ and 
$C\left( \eps N^{-1}+ N^{-(a+1)/2}\right)$ respectively. Thus, if $a\ge 3$, the 
result of Theorem \ref{thm:main_SN} is better than that of Theorem \ref{thm:main_old}.
Moreover, the condition on the mesh parameter $a$
is less restrictive in Theorem \ref{thm:main_SN} than in Theorem \ref{thm:main_old}.
If $\eps$ is very small in Theorem \ref{thm:main_SN}, we get that the error decreases when $a$ increases, which is 
a property the Samarskii-type schemes do not have in general. However, larger values of $a$ make the 
mesh less dense in the layer.
\end{remark}

\begin{remark}
\label{rem:main_SN+}
Based on Remark \ref{rem:trunc_ySN}, we can prove the error estimate of Theorem \ref{thm:main_SN} with
\[
\nu_1 = \max\left\{ \eps N^{-2} (\ln N)^2,\ \min\{ \eps, N^{-1}\}N^{-1} + \eps^2 N^{-1} + N^{-(a+2)/3} \right\}
\]
if we assume that $a\ge \min\left\{ \frac52, \max\{ 1, a_*\}\right\}$. This assumption is less restrictive, but the
original estimate is better in practice because $N^{-(a+2)/3}\ge N^{-(a+1)/2}$, cf.\  Remark \ref{rem:trunc_ySN}.
\end{remark}

The result of Theorem \ref{thm:main_SN} cannot be improved using the preconditioning approach because of Remark \ref{rem:bar-pre}. This approach
requires truncation-error estimates of the form \eqref{trunc_pre}, but we only have those that follow from the \eqref{trunc_bar}-like form in Lemmas
\ref{lem:trunc_vSN} and \ref{lem:trunc_ySN}. We also keep Remark \ref{rem:barSE} in mind and only discuss below the preconditioning approach on the $S_{1/\eps}$
mesh.

\begin{thm}
\label{thm:main_pre}
Let $b,c,f\in C^4(I)$ and \eqref{bc_cond} hold. 
Further, let $w$ be the solution of the problem \eqref{wprob} and 
let $W^N$ be the solution of the discrete problem \eqref{discr_w} on the mesh $S_{1/\eps}$ with $a>3$
and $N$ sufficiently large independently of $\eps$. Then it holds that
\[
\|w^N - W^N \| \le C \nu_2, 
\]
where
\[
\nu_2 = \max\left\{ \eps|\ln\eps|^3 N^{-2},\ \eps N^{-1} \right\}.
\]
\end{thm}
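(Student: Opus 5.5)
The plan is to apply Theorem \ref{thm:pre} on the mesh $S_{1/\eps}$, so $\lambda=\eps^{-1}$ and $\ln\lambda=|\ln\eps|$. For this I need a truncation-error estimate of the form \eqref{trunc_pre}. I will obtain it directly from $\tau_i[w]=\tau_i[v]+\tau_i[y]$, using Lemma \ref{lem:trunc_vSE} for $v$ and Lemma \ref{lem:trunc_ySE} for $y$, rather than passing through \eqref{trunc_bar}; this is the route anticipated in Remark \ref{rem:bar-pre}. Both lemmas apply: $a>3$ gives $a\ge 2$, and $N$ is assumed sufficiently large independently of $\eps$.

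\emph{Fine part, $1\le i\le J-1$.} I drop the exponential factor using $e^{-\beta x_i/\eps}\le 1$. Lemma \ref{lem:trunc_vSE} then gives $|\tau_i[v]|\le C(\ln\eps)^2N^{-2}$, and Lemma \ref{lem:trunc_ySE} gives the same bound for $|\tau_i[y]|$. Writing this as $\tilde\eta_1\eps^{-1}$ leads to the choice
\[
\tilde\eta_1=\eps(\ln\eps)^2N^{-2}.
\]
Hence $\tilde\eta_1\ln\lambda=\eps|\ln\eps|^3N^{-2}$, which is the first entry of $\nu_2$.

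\emph{Coarse part, $J\le i\le N-1$.} Here I aim for $\tilde\eta_2=\eps N^{-1}$, treating the two components separately.
\begin{itemize}
\item For $y$, Lemma \ref{lem:trunc_ySE} gives at most $\min\{\eps,N^{-1}\}N^{-1}+\eps^2N^{-1}\le 2\eps N^{-1}$, valid also at $i=J$.
\item For $v$ with $J+1\le i\le N-1$, the bound is $\mu=\eps^{a-2}\min\{\eps^2,N^{-2}\}\le\eps^{a}\le\eps N^{-1}$. The last step holds for $\eps\le N^{-1}$ since $a>3$; if $\eps>N^{-1}$, I use $\mu\le\eps^{a-2}N^{-2}\le\eps N^{-1}$ instead.
\item For $v$ at the transition point $i=J$, the bound is $\mu N$. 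I use $\min\{\eps^2,N^{-2}\}\le\eps N^{-1}$, the geometric-mean bound, to get $\mu N\le\eps^{a-2}\cdot\eps N^{-1}\cdot N=\eps^{a-1}$. This is not yet of the form $\eps N^{-1}$, so I split into cases. If $\eps\le N^{-1}$, then $\eps^{a-1}\le\eps\,\eps^{a-2}\le\eps N^{-(a-2)}\le\eps N^{-1}$, because $a-2>1$. If $\eps>N^{-1}$, then $\mu N=\eps^{a-2}N^{-1}\le\eps N^{-1}$, because $a-2\ge1$ and $\eps<1$.
\end{itemize}
In all cases $|\tau_i[w]|\le C\eps N^{-1}$ for $J\le i\le N-1$.

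Theorem \ref{thm:pre} then yields $\|w^N-W^N\|\le C\max\{\eps|\ln\eps|^3N^{-2},\ \eps N^{-1}\}=C\nu_2$. The main obstacle is the single point $i=J$ for $v$, where Lemma \ref{lem:trunc_vSE} carries the extra factor $N$. The argument above absorbs it only through the assumption $a>3$ together with the case split on $\eps$ versus $N^{-1}$. If that bookkeeping failed, my fallback would be to bound $\tau_J[v]$ directly by \eqref{trunc1} with $e^{-b(0)x_{J-1}/\eps}\le C\eps^{a}$, giving $C\eps^{a-2}N^{-1}$.
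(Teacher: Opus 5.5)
Your proposal is correct and matches the paper's proof: combine Lemmas \ref{lem:trunc_vSE} and \ref{lem:trunc_ySE} to get \eqref{trunc_pre} with $\tilde\eta_1=\eps(\ln\eps)^2N^{-2}$ and $\tilde\eta_2=\eps N^{-1}$, then apply Theorem \ref{thm:pre}. Your case split on $\eps$ versus $N^{-1}$ spells out how $a>3$ absorbs $\mu N$ and $\mu$ into $\eps N^{-1}$, a step the paper leaves implicit.
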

\begin{proof}
We get from Lemmas \ref{lem:trunc_vSE} and \ref{lem:trunc_ySE} that
\[
|\tau_i[w]| \le C \left\{\begin{array}{l}
(\ln \eps)^2 N^{-2}, \ \ 1\le i \le J-1, \\
\eps N^{-1}, \ \ J\le i \le N-1,
\end{array}\right.
\]
which means that \eqref{trunc_pre} holds with
\[
\tilde\eta_1 = \eps(\ln \eps)^2 N^{-2} \ \text{ and } \ \tilde\eta_2 = \eps N^{-1}.
\]
The result then follows from Theorem \ref{thm:pre}.
\end{proof}

\begin{remark}
\label{rem:main_pre}
If $a\ge 3$ in the above proof, we can show that
\be
\label{main_pre_lin}
\|w^N - W^N \| \le C\eps (\ln\eps)^2 N^{-1}.
\ee
To get this result, we estimate $|\tau_i[w]|$ by combining \eqref{trunc_vSE_lin} and \eqref{trunc_ySE_lin}, and then apply
Theorem \ref{thm:pre}.

The error estimate \eqref{main_pre_lin} is the same as in \cite{VN21}, which is proved there for the upwind scheme on the mesh $S_{1/\eps}$.
However, the present numerical method is simpler than the method in \cite{VN21}. The latter is based on the numerical solution of the 
Neumann problem corresponding to \eqref{1DCD}. The component $v$ of the solution decomposition is completely known for the Neumann problem, 
so $u_0+v$ is used in the same way in \cite{VN21} as the reduced solution $u_0$ in this paper. This leaves $y$ as the 
only unknown component, which is found numerically. 
Although the upwind scheme is simpler than the ASI scheme, the use of $v$ corresponds to the exponential fitting 
that is present in the ASI scheme. What makes the method in \cite{VN21} more complicated is that {\em two}\/ 
numerical approximations of $y$ need to be calculated for the same differential equation with different Neumann 
conditions at $x=0$. Then, a linear combination of these 
two numerical approximations of $y$ is used to form the numerical solution of \eqref{1DCD}.

Since it typically holds that $|\ln\eps| < N$, the result of 
Theorem \ref{thm:main_pre} is practically better than \eqref{main_pre_lin}.
\end{remark}

\begin{remark}
\label{rem:main_pre+}
Similarly to Remark \ref{rem:main_SN+}, we can change the error estimate in Theorem \ref{thm:main_pre} 
if we assume that $a\ge 4$. Then, it can be proved that 
\[
\nu_2 = \max\left\{ \eps|\ln\eps|^3 N^{-2},\ \min\{ \eps, N^{-1}\}N^{-1} + \eps^2 N^{-1} \right\},
\]
which is slightly better than the original estimate.
\end{remark}

\section{Further Discussion}
\label{sec:disc}

Two topics related to the numerical method are discussed in this section.

\subsection{Finding the reduced solution numerically}
\label{sec:u0_numer}
If the reduced solution $u_0$ is not known, a classical numerical method needs to be used to solve the reduced problem \eqref{red} on the 
mesh $S_\lambda$. Let $\hat U^N$ be a numerical approximation $u_0^N$ and suppose
\[
\| u_0^N - \hat U^N\| \le C N^{-k}
\]
fore some $k>0$. In the discrete problem \eqref{discr_w}, we replace $u_0(0)$ with $\hat U^N_0$ and we also need to calculate $u_0''(x_i)$.
The latter is done in \cite[Subsection 4.1]{VNfc} by expressing $u_0'$ from the reduced equation,
\be
\label{u0'}
u_0' = \frac{cu_0-f}{b},
\ee
and then using $\bar D' u'_0(x_i)$ (see \eqref{D'cent}) and $u_0(x_i)\approx \hat U^N_i$ to approximate $u_0''(x_i)$.
We discuss a more accurate alternative here, in which we differentiate \eqref{u0'} and reuse it to express $u_0''$ in terms of $u_0$ and the 
functions $b^{(j)}$, $c^{(j)}$, and $f^{(j)}$, $j=0,1$.
When $u_0(x_i)$ is replaced with $\hat U^N_i$, it follows that $u_0''(x_i)$ is approximated with the same $\Oh\left(N^{-k}\right)$-accuracy 
as $u_0(x_i)$. Let $\hat W^N$ stand for the unique solution of the discrete problem \eqref{discr_w} with the right-hand side modified in the 
way just described.

We now estimate the quantity $\| W^N - \hat W^N\|$ which shows how much the approximation of $u_0$ contributes to the error estimates from 
the preceding section. Since the operator $L^N$ satisfies the maximum principle, we have
\begin{eqnarray*}
 \| W^N - \hat W^N\| & \le & C\left( | W^N_0 - \hat W^N_0 | + \| L^N W^N - L^N \hat W^N\| \right) \\  
 & \le & C\left( N^{-k} + \eps N^{-k}\right) \\
 & \le & C N^{-k}.
\end{eqnarray*}

\begin{remark}
\label{rem:RKrate}
If $k$ is sufficiently large, the accuracy of our numerical method on the mesh $S_N$ is practically preserved, cf.\ Theorem 
\ref{thm:main_SN}. On the mesh $S_{1/\eps}$, however, the term $\Oh\left(N^{-k}\right)$ may influence the accuracy,  
see Theorem \ref{thm:main_pre}. 
\end{remark}

\subsection{The case of the linear reduced solution}
\label{sec:u0lin}

If the reduced solution $u_0$ is linear, there is no need to decompose $u$. We can discretize the original problem \eqref{1DCD} directly 
instead of using the problem \eqref{wprob}. This is because $\tau_i[u_0]=0$, so that $\tau_i[u] = \tau_i[w]$, $1\le i \le N-1$. 
Moreover, when $u_0$ is linear, the derivatives of the component $y$ can be estimated more sharply than in \eqref{dery},
\be
\label{dery_u0lin}
	|y^{(j)}(x)| \le C\eps^{1-j}e^{-\beta x/\eps}, \ \ j=0,1,2,3,4, \ \ x\in [0,1].
\ee
The derivative estimates in \eqref{dery} are proved in \cite[Theorem 2]{VN21} indirectly via a Neumann problem corresponding to 
\eqref{1DCD}. If $u_0$ is linear, the same technique gives the estimates in \eqref{dery_u0lin}.
Using the sharper derivative estimates in \eqref{dery_u0lin}, we can improve the error estimates of Theorems \ref{thm:main_SN} 
and \ref{thm:main_pre}. The improvement is more significant on $S_N$ than on $S_{1/\eps}$.

\begin{thm}
\label{thm:main_SN_u0lin}
Let $b,c,f\in C^4(I)$ and \eqref{bc_cond} hold, and suppose the reduced problem \eqref{red} has a linear solution.
Further, let $u$ be the solution of the problem \eqref{1DCD} and 
let $U^N$ be the solution of the discrete problem \eqref{discr_u} on the mesh $S_N$ with $a\ge a_*$.
Then it holds that
\[
\|u^N - U^N \| \le C\max\left\{ \eps N^{-2}(\ln N)^2,\ N^{-a}\right\}.
\]
\end{thm}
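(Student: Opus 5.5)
Since $u_0$ is linear, $\tau_i[u_0]=0$, and therefore $\tau_i[u]=\tau_i[w]=\tau_i[v]+\tau_i[y]$ for $1\le i\le N-1$. The plan is to derive a truncation-error bound of the form \eqref{trunc_bar} on $S_N$ with
\[
\eta_1=\eps N^{-2}(\ln N)^2, \qquad \eta_2=N^{-a},
\]
and then apply Theorem \ref{thm:bar} with $\lambda=N$, so that $\kappa=1$. The boundary values of $u^N-U^N$ vanish, so Theorem \ref{thm:bar} applies verbatim to $u$ and \eqref{discr_u}.

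For the $v$-component, Lemma \ref{lem:trunc_vSN} with $a\ge a_*$ already gives what is needed. On the fine part we have $N^{-2}(\ln N)^2e^{-\beta x_i/\eps}=\eps N^{-2}(\ln N)^2\cdot\eps^{-1}e^{-\beta x_i/\eps}$, and on $J\le i\le N-1$ the bound is $N^{-a}$.

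For the $y$-component, I would redo the proof of Lemma \ref{lem:trunc_ySN} with the sharper estimates \eqref{dery_u0lin}. This amounts to deleting every $\eps$-term that came from the regular part of \eqref{dery}.
\begin{itemize}
\item For $1\le i\le J-1$, the computations of $E^0_i,E^1_i,E^2_i$ (with \eqref{eleft} and $N^{a/J}\le C$) give $CN^{-2}(\ln N)^2e^{-\beta x_i/\eps}$, which has the required form.
\item For $J+2\le i\le N-1$, the bounds \eqref{y_coarse0}--\eqref{y_coarse2} reduce to $C\bar E^s\le CN^{-a}$, since $H^s\eps^{-s}\lambda^{-a}e^{-\beta H/\eps}$ is bounded by $CN^{-a}$.
\item The delicate indices are $i=J,J+1$. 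There $x_{i-1}\ge \xi-h$, so only $e^{-\beta x_{i-1}/\eps}\le CN^{-a}$ is available. Using \eqref{trunc0} directly gives
\[
|\tau_i[y]|\le C\left(\eps\|y''\|+\|y'\|\right)\le C\eps^{-1}e^{-\beta x_{i-1}/\eps}\le C\eps^{-1}N^{-a},
\]
which carries a bad factor $\eps^{-1}$.
\end{itemize}

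This is the main obstacle. The $\eps$-splitting trick of Lemma \ref{lem:trunc_ySN} does not give a clean $N^{-a}$ here. My first attempt is to exploit the exponential decay more carefully. For $i=J+1$, $x_{i-1}=\xi$, so $\eps^{-1}e^{-\beta\xi/\eps}$ is paired with the discrete terms and $D^+$ over a coarse step; a factor $\eps^{-1}$ in $\|y'\|$ occurs only together with $e^{-\beta x/\eps}$ on $[x_i,x_{i+1}]\subset[\xi,1]$. For $i=J$, the terms $D''$ and $D^+$ involve $y(x_{J\pm1})-y(x_J)$ divided by $\hbar_J\ge H/2$ or by $H$, and these differences are $\le C\eps\,e^{-\beta x_{J-1}/\eps}\le C\eps N^{-a}$. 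Hence the discrete part is $\le C\eps N^{-a}(\eps/(hH)+1/H)\le CN^{1-a}$, using $\eps/h\le CN/\ln N$.

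That still loses a power of $N$, so the fallback is to absorb these two points into the barrier argument instead of into $\eta_2$. At $i=J,J+1$ the exponential factor $e^{-\beta x_i/\eps}\approx N^{-a}$ is still present, so I would establish a bound of the type $C\eps^{-1}e^{-\beta x_i/\eps}\cdot(\text{small})$. I would then extend the fine-part form of \eqref{trunc_bar} to $i\le J+1$, noting that the estimate \eqref{compB} for $L^NB^N_i$ gives $L^NB^N_i\ge C\min\{\eps^{-1},H^{-1}\}B^N_i$ there and that $B^N_{J+1}$ is comparable to $N^{-a}$ up to a constant factor. The comparison function $C\,\eta\,B^N_i$ then absorbs these local truncation errors with a contribution of order $N^{-a}$ to the final error. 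Alternatively, I would use the $v$-style estimate analogous to \eqref{trunc_v5}, with $b(0)$ replaced by $\beta$ and $a\ge a_*$.

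Collecting these bounds gives \eqref{trunc_bar} with $\eta=\max\{\eps N^{-2}(\ln N)^2,N^{-a}\}$, and Theorem \ref{thm:bar} then yields the assertion.
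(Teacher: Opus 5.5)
\paragraph{Gap at $i=J,J+1$: a misreading of \eqref{dery_u0lin}.}
Away from $i=J,J+1$ your plan is the paper's proof: Lemma~\ref{lem:trunc_vSN} for $v$, the computations of Lemma~\ref{lem:trunc_ySN} redone with \eqref{dery_u0lin} for $y$, and Theorem~\ref{thm:bar} with $\lambda=N$, $\kappa=1$. At $i=J,J+1$, however, your argument does not close. The cause is a slip in reading \eqref{dery_u0lin}. For $j=1$ the factor is $\eps^{1-j}=1$, so $|y'(x)|\le Ce^{-\beta x/\eps}$. For $j=2$ you get $\eps|y''(x)|\le Ce^{-\beta x/\eps}$. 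There is no $\eps^{-1}$, and \eqref{trunc0} gives at once
\[
|\tau_i[y]|\le C\left(\eps\|y''\|_{[x_{i-1},x_{i+1}]}+\|y'\|_{[x_i,x_{i+1}]}\right)\le Ce^{-\beta x_{i-1}/\eps}\le CN^{-a},\qquad i=J,J+1,
\]
since $x_{i-1}\ge\xi-h$ and $e^{\beta h/\eps}=N^{a/J}\le C$. This is exactly how the paper handles these two points, and it needs no condition on $a$. Your ``main obstacle'' is an artifact of the slip.

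\paragraph{Why your fallbacks do not close the gap.}
As written, the proposal never reaches $N^{-a}$ at $x_J,x_{J+1}$, and neither fallback would get there.
\begin{itemize}
\item Extending the fine-part form of \eqref{trunc_bar} to $i\le J+1$ is incompatible with the proof of Theorem~\ref{thm:bar}. Step \eqref{comp1} uses $h_{i+1}=h$. At $i=J$, by contrast, $D''B^N_J\ge0$ and $\beta/(3\eps+\beta H)\le H^{-1}\le N$. Hence the barrier $\gamma^N$ of that theorem satisfies $L^N\gamma^N_J\le C\eta N$. Absorbing a truncation error of size $\eps^{-1}N^{-a}$ at $x_J$ would then force $\eta\ge c\,\eps^{-1}N^{-a-1}$, which is not $\eps$-uniform.
\item Your supporting claim that $B^N_{J+1}$ is comparable to $N^{-a}$ is also false. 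The barrier decays at rate $\beta/3$, so $B^N_J=(1+a\ln N/(3J))^{-J}\ge N^{-a/3}$. In any case, the size of $C\eta B^N$ is set by $B^N_0=1$, not by its value near $x_J$.
\item The ``$v$-style'' route fails too. Estimate \eqref{trunc_v5} gains only because $v$ decays at the faster rate $b(0)>\beta$: the surplus factor $e^{-(b(0)-\beta)\xi/\eps}=N^{-a(q-1)}$ offsets $H^{-1}$ when $a\ge a_*$. For $y$ the decay rate is only $\beta$ (effectively $q=1$), so you are left with $CN^{1-a}$. In the paper, $a\ge a_*$ is used only for $\tau_i[v]$.
\end{itemize}
Replacing your treatment of $i=J,J+1$ with the displayed estimate closes the proof.
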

\begin{proof}
We only need to show the improved estimates for $|\tau_i[y]|$. When \eqref{dery_u0lin} is used instead of \eqref{dery}, it is easy to verify 
that the proof technique of Lemma \ref{lem:trunc_ySN} gives
\[
|\tau_i[y]| \le C\left\{\begin{array}{ll}
N^{-2}(\ln N)^2 e^{-\beta x_i/\eps}, & 1\le i \le J-1, \\
N^{-a}, & J\le i \le N-1.
\end{array}\right.
\]
The case $i=J,J+1$ is now much simpler as it suffices to use \eqref{trunc0},
\[
|\tau_i[y]| \le C\left( \eps \|y''\|_{[x_{i-1},x_{i+1}]} + \| y'\|_{[x_i,x_{i+1}]}\right) \le C e^{-\beta x_{i-1}/\eps} \le C N^{-a}.
\]
Note that the above estimates of $|\tau_i[y]|$ do not impose any requirement on the mesh parameter $a$. The assumption $a\ge a_*$ is only 
needed to get the estimates of $|\tau_i[v]|$ like in Lemma \ref{lem:trunc_vSN}.

Therefore, we have
\[
|\tau_i[u]| \le C \left\{\begin{array}{ll}
\eps N^{-2}(\ln N)^2 \eps^{-1}e^{-\beta x_i/\eps}, & 1\le i \le J-1, \\
 N^{-a}, & J\le i \le N-1.
\end{array}\right.
\]
Then, the result of the theorem follows from Theorem \ref{thm:bar}.
\end{proof}

The following counterpart of Theorem \ref{thm:main_pre} also holds.

\begin{thm}
\label{thm:main_SE_u0lin}
Let $b,c,f\in C^4(I)$ and \eqref{bc_cond} hold, and suppose the reduced problem \eqref{red} has a linear solution.
Further, let $u$ be the solution of the problem \eqref{1DCD} and 
let $U^N$ be the solution of the discrete problem \eqref{discr_u} on the mesh $S_{1/\eps}$ with $a\ge 4$. 
Then it holds that
\[
\|u^N - U^N \| \le C \max\left\{ \eps|\ln\eps|^3 N^{-2},\ \eps N^{-2} + \eps^2 N^{-1} \right\},
\]
provided $N$ is sufficiently large independently of $\eps$.
\end{thm}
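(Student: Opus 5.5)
Since $u_0$ is linear, $\tau_i[u_0]=0$, so $\tau_i[u]=\tau_i[v]+\tau_i[y]$. The plan is to bound both pieces in the form \eqref{trunc_pre} and then apply Theorem \ref{thm:pre} with $\lambda=\eps^{-1}$. The target is
\[
\tilde\eta_1=\eps(\ln\eps)^2N^{-2}, \qquad \tilde\eta_2=\eps N^{-2}+\eps^2N^{-1}.
\]
Theorem \ref{thm:pre} then yields the error bound $C\max\{\tilde\eta_1|\ln\eps|,\ \tilde\eta_2\}$, which is the asserted estimate. The boundary data cause no difficulty, since $U^N_0=u(0)=0$ and $U^N_N=u(1)=0$.

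For $v$ we reuse Lemma \ref{lem:trunc_vSE} with $a\ge4$, so $\mu=\eps^{a-2}\min\{\eps^2,N^{-2}\}\le\eps^2\min\{\eps^2,N^{-2}\}$. This gives $(\ln\eps)^2N^{-2}$ on the fine part and $\mu\le\eps^2N^{-2}$ for $J+1\le i\le N-1$. At $i=J$ it gives $\mu N\le\eps^2 N^{-1}$, and this is exactly where the term $\eps^2N^{-1}$ in the statement comes from.

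For $y$ we repeat the proof of Lemma \ref{lem:trunc_ySE}, replacing \eqref{dery} by the sharper \eqref{dery_u0lin}; that is, the additive $\eps$ in each derivative bound disappears.
\begin{itemize}
\item On $1\le i\le J-1$, the bounds $E^0,E^1,E^2\le C(\ln\eps)^2N^{-2}$ persist unchanged.
\item For $J+1\le i\le N-1$, only the exponential terms $\bar E^s$ from \eqref{y_coarse0}--\eqref{y_coarse2} survive, and these satisfy $\bar E^s\le C\eps^{a-s}N^{-s}$. With $s=2$ and $a\ge4$ this is at most $C\eps^2N^{-2}$. For $i=J+1$ we use the argument of Lemma \ref{lem:trunc_ySE}, since the factor $e^{-\beta H/\eps}$ is not needed there.
\item At $i=J$ we recompute with $e^{-\beta\xi/\eps}=\eps^a$ and $e^{-\beta x_{J-1}/\eps}=\eps^{a-a/J}$:
\[
E^0_J\le C\eps\rho_J^2\eps^{a-1}\le CN^{-2}\eps^{a-2},\qquad E^1_J\le CN^{-2}\eps^{a-2},
\]
\[
E^2_J\le C\eps\bigl[N^{-1}\eps^{a-2}+N^{-2}\eps^{a-3-a/J}\bigr].
\]
With $a\ge4$ and $N$ large enough that $a-a/J-3\ge \tfrac12$ (or better), all of these are $\le C(\eps^2N^{-2}+\eps^2N^{-1})$.
\end{itemize}

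Combining the two pieces gives $|\tau_i[u]|\le C(\ln\eps)^2N^{-2}$ for $i\le J-1$ and $|\tau_i[u]|\le C(\eps^2N^{-2}+\eps^2N^{-1})$ for $i\ge J$, which is within $\tilde\eta_2$. Theorem \ref{thm:pre} then gives the claim.

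The main obstacle is the $i=J$ term $E^2_J$. There the factor $\eps^{-3}e^{-\beta x_{J-1}/\eps}=\eps^{a-3-a/J}$ must be controlled, and this requires both $a\ge4$ and $N$ sufficiently large independently of $\eps$; the same largeness condition is needed at $i=J$ in Lemma \ref{lem:trunc_vSE}. The resulting $\eps^2N^{-1}$ contributions from $v$ and from $y$ at $x_J$ cannot be improved by this approach, which explains the form of the estimate.
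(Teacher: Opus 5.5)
Your proposal is correct and is essentially the paper's proof: combine Lemma \ref{lem:trunc_vSE} with a rerun of Lemma \ref{lem:trunc_ySE} under \eqref{dery_u0lin}, then apply Theorem \ref{thm:pre} with $\tilde\eta_1=\eps(\ln\eps)^2N^{-2}$ and $\tilde\eta_2=\eps N^{-2}+\eps^2N^{-1}$. One minor overstatement: at $i=J$ the piece $C\eps H^2\|y^{(4)}\|$ of $E^2_J$ gives $C\eps^{a-2-a/J}N^{-2}$, which for $a=4$ is $\le C\eps N^{-2}$ but not $\le C\eps^2N^{-2}$ uniformly in $\eps$ --- this is exactly where the $\eps N^{-2}$ term in the statement comes from (the paper records $|\tau_J[y]|\le C(\eps N^{-2}+\eps^2N^{-1})$), so your conclusion via $\tilde\eta_2$ is unaffected.
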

\begin{proof}
We follow the steps in the proof of Lemma \ref{lem:trunc_ySE} in the situation when \eqref{dery_u0lin} holds.
The estimates of $|\tau_i[y]|$ do not change for $1\le i \le J-1$, but for the other values of $i$, we get
\[
|\tau_J[y]| \le C\left( \eps N^{-2} + \eps^2 N^{-1} \right)
\]
and
\[
|\tau_i[y]| \le C\eps N^{-2}, \ \ J+1\le i \le N-1.
\]
When this is combined with Lemma \ref{lem:trunc_vSE}, it follows that
\[
|\tau_i[u]| \le C \left\{\begin{array}{ll}
(\ln\eps)^2 N^{-2}, & 1\le i \le J-1, \\
\left( \eps N^{-2} + \eps^2 N^{-1} \right) , & J\le i \le N-1.
\end{array}\right.
\]
We then use Theorem \ref{thm:pre} to complete the proof.
\end{proof}

\section{Numerical Results}
\label{sec:numer}

Our first test problem is from~\cite[page 86]{Linss10},
\be
\label{ex1u}
-\eps u'' -u' +2u =e^{x-1}, \ \ x\in (0,1), \ \  u(0)=u(1)=0.
\ee
The reduced solution of the problem \eqref{ex1u} is known, $u_0(x) = e^{x-1}-e^{2(x-1)}$, so we can form the component $w=u-u_0$ which solves
\be
\label{ex1}
-\eps w'' -w' +2w = \eps u_0''(x),  \ \ x\in (0,1), \ \  w(0)=-u_0(0), \ \ w(1)=0.
\ee
The exact solution of \eqref{ex1u} is also known, and we can calculate the error of our numerical method directly,
\[
E^N_\eps = \left\| w^N-W^N\right\|,
\]
as well as the numerical rate of convergence, 
\[
r^N_\eps= \frac{\ln E^{N/2}_\eps - \ln E^{N}_\eps}{\ln 2}.
\]

In the results presented below, the parameters $\beta$ and $Q$ are fixed to $\beta=0.8$ and $Q=\frac12$. 
We also experimented with other values of $\beta\in (0.8,1)$ and $Q\in\left[\frac12, 1\right)$, and the results were 
similar. As for the parameter $a$, we show how it influences the error in Figure \ref{fig:a}. Elsewhere, it is set to $a=3$.

\begin{table}\small
	\centering
	\begin{tabular}{c||c|c|c|c|c|c}
		$\eps$ &  $N=2^5$   & $N=2^6$    &      $N=2^7$    & $N=2^8$    &  $N=2^9$  &$N=2^{10}$ \\
		\hline\hline
$10^{-1}$ & 1.27E-04 & 3.18E-05 & 7.95E-06 & 1.99E-06 & 4.97E-07 & 1.24E-07 \\
 &  & 2.00 & 2.00 & 2.00 & 2.00 & 2.00  \\ \hline
$10^{-2}$  &  4.13E-04 & 9.08E-05 & 2.66E-05 & 5.87E-06 & 1.33E-06 & 3.08E-07 \\
 &  & 2.19 & 1.77 & 2.18 & 2.14 & 2.12    \\ \hline
$10^{-3}$ &  6.88E-05 & 3.02E-05 & 1.18E-05 & 5.41E-06 & 1.87E-06 & 5.29E-07\\
 &  & 1.19 & 1.35 & 1.13 & 1.53 & 1.82    \\ \hline
$10^{-4}$ &  5.25E-06 & 2.71E-06 & 1.51E-06 & 6.81E-07 & 3.33E-07 & 1.58E-07\\
  &  & 0.96 & 0.84 & 1.15 & 1.03 & 1.08    \\ \hline
 $10^{-5}$ &  7.19E-07 & 3.27E-07 & 1.39E-07 & 7.35E-08 & 3.50E-08 & 1.77E-08\\
  &  & 1.14 & 1.24 & 0.91 & 1.07 & 0.98    \\ \hline
$10^{-6}$ &  5.51E-07 & 5.12E-08 & 1.39E-08 & 7.37E-08 & 3.61E-09 & 1.76E-09\\
  &  & 3.43 & 1.88 & 0.91 & 1.03 & 1.04   \\ \hline
 $10^{-7}$ &  2.37E-07 & 4.04E-08 & 2.83E-09 & 7.00E-10 & 3.52E-10 & 1.76E-10\\ 
  &  & 2.55 & 3.83 & 2.02 & 0.99 & 1.00   \\ \hline
$10^{-8}$ &  5.28E-07 & 2.42E-08 & 2.26E-09 & 2.47E-10 & 3.52E-11 & 1.76E-11\\
 &  & 4.45 & 3.42 & 3.19 & 2.81 & 1.00    \\ \hline
 $10^{-9}$ &  5.27E-07 & 3.92E-08 & 2.92E-09 & 2.20E-10 & 1.62E-11 & 1.90E-12\\
  &  & 3.75 & 3.75 & 3.73 & 3.76 & 3.09  
\end{tabular}
	\caption{The maximum errors $E^N_\eps$ and the corresponding convergence rates $r^N_\eps$ on $S_N$ for the problem \eqref{ex1}.}
 	\label{tab:ex1_SN}
\end{table}

Table \ref{tab:ex1_SN} illustrates the efficiency of our numerical method. The accuracy is very high, but the convergence 
rates look chaotic. The error estimate $\nu_1$ in Theorem \ref{thm:main_SN} does not match the behavior of the errors in Table \ref{tab:ex1_SN} well. 
This is why we consider the quantity
\[
F^N_\eps := \max\left\{ \eps N^{-2}(\ln N)^2,\ \min\{\eps, N^{-1}\}N^{-1} + N^{-a}\right\},
\]
which replaces the $\nu_1$-terms  $\eps N^{-1}$ and $N^{-(a+1)/2}$ with the terms that are present in the estimate of $|\tau_i[w]|$ for $J+2\le i \le N-1$.
The replaced terms result from the proof technique and do not seem to represent the real situation. They stem from the estimate of $|\tau_i[y]|$ at two points only, 
viz., $x_J$ and $x_{J+1}$. The agreement between the values of $F^N_\eps$ and $E^N_\eps$ in Table \ref{tab:ex1_SN} is represented graphically in Figure \ref{fig:T1}.
The values match well when $\eps$ is greater, $\eps = 10^{-j}$, $j=1,2,3,4$, as exemplified by $\eps = 10^{-3}$.  The match is also good for very small values of $\eps$, 
like $\eps = 10^{-9}$, but not perfect for the intermediate values from $\eps=10^{-5}$ to $\eps=10^{-7}$, as shown by $\eps=10^{-6}$ in Figure \ref{fig:T1}.
However, $F^N_\eps$ gives a better overall representation of the errors than $\nu_1$.

\begin{figure}[!ht]
\centering
\includegraphics[width=0.7\textwidth]{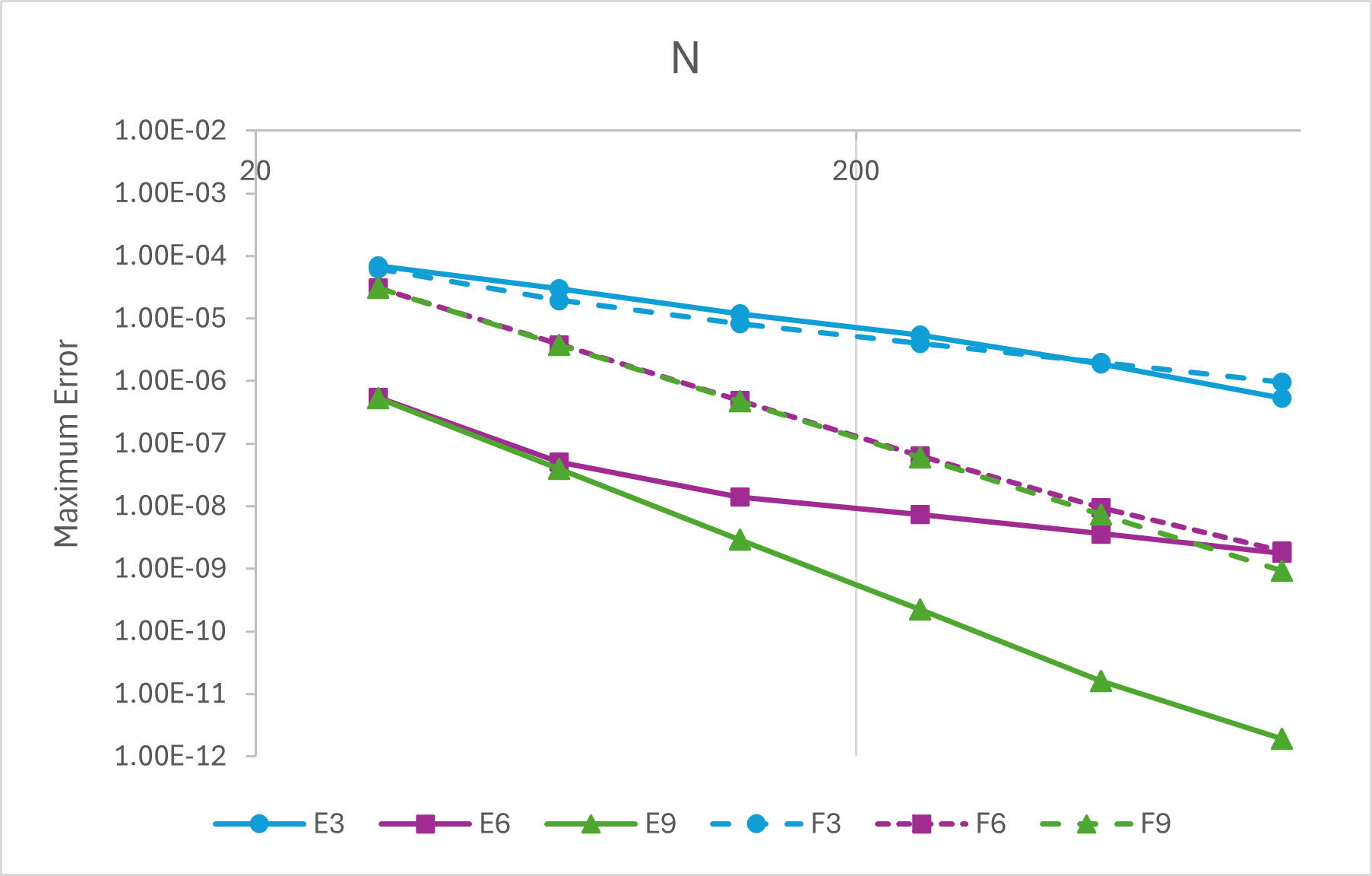}
\caption{Graphical comparison of the errors in Table \ref{tab:ex1_SN} and $F^N_\eps$. The labels E$j$ and F$j$ stand for the values of 
$E^N_\eps$ and, respectively, $F^N_\eps$ for $\eps=10^{-j}$.}
\label{fig:T1}
\end{figure}

We now discuss the numerical rates of convergence in Table \ref{tab:ex1_SN}. The mesh is uniform when $\eps= 0.1$, and the rate is 2 since $\eps$ is large, \cite{KT78}. 
For other values of $\eps$, the mesh is nonuniform, and the behavior of the convergence rates can be explained by different terms that dominate the error
in dependence on the values of $\eps$ and $N$. It turns out the term $\eps N^{-2} (\ln N)^2$ is always less than the other terms of $F^N_\eps$.
This leaves the terms within $\min\{\eps, N^{-1}\}N^{-1} + N^{-a}$ for consideration. Convergence rates close to 1 can be explained by 
the term $\eps N^{-1}$ dominating the error. If the dominant term is $N^{-2}$, the rates are close to 2, and if the dominant term is $N^{-a}= N^{-3}$, the rates 
are around 3.

 \begin{table}[!ht]\small
	\centering
	\begin{tabular}{c||c|c|c|c|c|c}
		$\eps$ & $N=2^5$   & $N=2^6$    &      $N=2^7$    & $N=2^8$    &  $N=2^9$  &$N=2^{10}$ \\
		\hline\hline
$10^{-1}$ & 3.53E-04 & 8.84E-05 & 2.21E-05 & 5.53E-06 & 1.38E-06 & 3.46E-07\\
	&  & 2.00 &    2.00 &     2.00 &     2.00 &   2.00      \\ \hline
$10^{-2}$ &  1.08E-02 & 2.53E-03 & 7.13E-04 & 1.61E-04 & 3.69E-05 & 8.54E-06\\
  &  & 2.09 & 1.83 & 2.15 & 2.12 & 2.11   \\ \hline
$10^{-3}$ & 1.81E-02 & 8.05E-03 & 3.29E-03 & 1.47E-03 & 5.12E-04 & 1.45E-04\\
   &  & 1.17 & 1.29 & 1.16 & 1.53 & 1.82    \\ \hline
$10^{-4}$ & 1.47E-02 & 7.54E-03 & 4.09E-03 & 1.89E-03 & 9.22E-04 & 4.37E-04\\
	& & 0.96 & 0.88 & 1.11 & 1.03 & 1.08      \\ \hline
$10^{-5}$ & 1.90E-02 & 8.72E-03 & 3.85E-03 & 2.01E-03 & 9.69E-04 & 4.88E-04\\
	&  & 1.12 & 1.18 & 0.94 & 1.05 & 0.99      \\ \hline
$10^{-6}$ & 1.48E-02 & 7.60E-03 & 3.85E-03 & 2.02E-03 & 9.93E-04 & 4.88E-04\\
	&  & 0.96 & 0.98 & 0.94 & 1.02 & 1.03    \\ \hline
 $10^{-7}$ & 1.90E-02 & 7.60E-03 & 4.15E-03 & 1.94E-03 & 9.74E-04 & 4.88E-04\\
&  & 1.32 & 0.87 & 1.09 & 0.99 & 1.00    \\ \hline
$10^{-8}$ & 1.48E-02 & 8.72E-03 & 4.15E-03 & 1.94E-03 & 9.75E-04 & 4.88E-04\\ 
&  &   0.76 & 1.07 & 1.09 & 0.99 & 1.00   \\ \hline
$10^{-9}$ & 1.48E-02 & 7.60E-03 & 3.86E-03 & 1.94E-03 & 9.93E-04 & 4.93E-04\\
&  &  0.96 & 0.98 & 0.99 & 0.97 & 1.01 
\end{tabular}
\caption{The maximum errors $E^{*,N}_\eps$ and the corresponding convergence rates on $S_N$ for the problem \eqref{ex1u}.}  
\label{tab:ex1_SN_comp}
\end{table}

The results in Table \ref{tab:ex1_SN} should be compared to those in Table \ref{tab:ex1_SN_comp}, which were obtained 
when the ASI scheme is used to directly discretize \eqref{ex1u} on the mesh $S_N$. In this situation, the maximum error is
\[
E^{*,N}_\eps := \left\| u^N-U^N\right\|,
\]
where $U^N$ is the solution to the discretization \eqref{discr_u} of the problem \eqref{ex1u}. All errors in Table \ref{tab:ex1_SN} are less 
than the corresponding ones in Table \ref{tab:ex1_SN_comp}, more significantly so for smaller values of $\eps$. 
A graphical comparison of the errors is presented in Figure \ref{fig:E-Est} for $\eps=10^{-8}$.
Also included in Figure \ref{fig:E-Est} are the errors of the Samarskii scheme on the mesh $S_N$ for the problem \eqref{ex1}. 
We can conclude that it is vital for the high accuracy of our method to use both $u_0$ and the ASI scheme. If either 
of the two is omitted, the results are considerably worse. By the way, it is interesting 
to note that when $\eps$ is smaller, it is better to use $u_0$ than the ASI scheme. This is so because the Samarskii scheme is 
almost-second-order accurate when applied to the problem \eqref{ex1}, see Theorem \ref{thm:main_old}, whereas the order of accuracy of the 
ASI scheme applied to problem \eqref{ex1u} is 1, \cite{VNfc}.

\begin{figure}[!ht]
\centering
\includegraphics[width=0.7\textwidth]{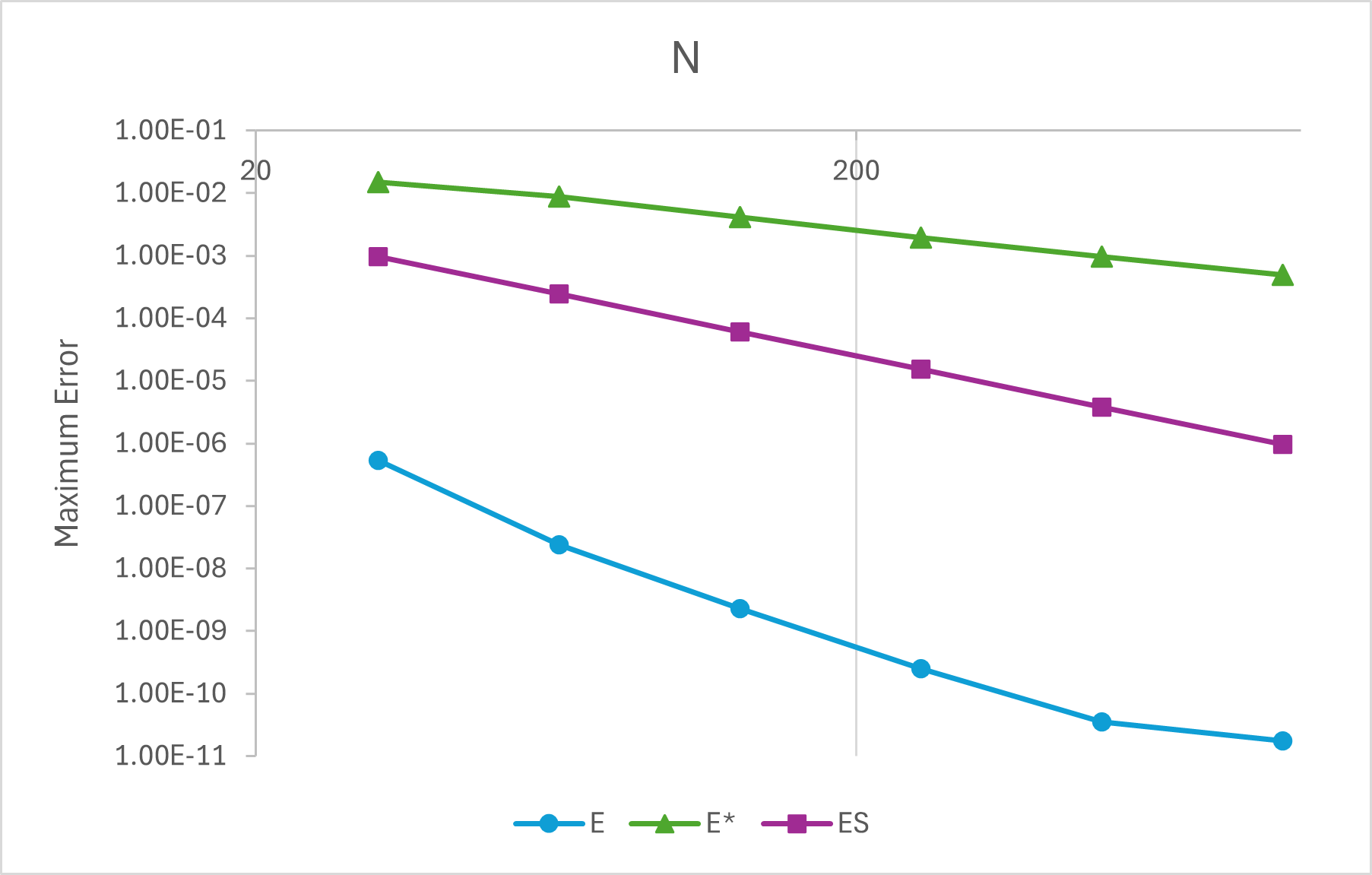}
\caption{Graphical comparison of the errors in Tables \ref{tab:ex1_SN} and \ref{tab:ex1_SN_comp}, and the errors of the 
Samarskii scheme on the mesh $S_N$ for the problem \eqref{ex1} with $\eps=10^{-8}$. 
The labels E, E*, and ES stand for the values of $E^N_\eps$, $E^{*,N}_\eps$, and the Samarskii-scheme errors, respectively.}
\label{fig:E-Est}
\end{figure}

As for the convergence rates, those in Table \ref{tab:ex1_SN} are either close to or better than the corresponding rates in 
Table \ref{tab:ex1_SN_comp}. However, the latter rates show more regularity: for greater values of $\eps$ (when the mesh is uniform or 
closer to a uniform one) the rates are equal or approximately equal to 2, 
and then they drop to 1 as $\eps$ decreases. The same change in the convergence rate as $\eps$ decreases is also present on the uniform 
mesh, \cite{KT78}. This behavior on the mesh $S_N$ has been observed in the numerical experiments in \cite{VNfc}. 

We conclude this part of the numerical experiments with the ASI scheme on the mesh $S_N$ by exploring how the mesh parameter $a$ influences the 
error. Figure \ref{fig:a} shows a comparison of errors for different values of $a$ and $\eps=10^{-8}$. According to 
Theorem \ref{thm:main_SN}, the values of $a\ge 3$ are theoretically safe, but we see that even $a=1$ and $a=2$ give acceptable results.
In general, the errors are smaller if $a$ is greater, but as $N$ increases, the term $N^{-a}$ becomes too small and the term
$\eps N^{-1}$ starts dominating the error. This is why for $a\ge 3$, the errors eventually become the same and the convergence rate becomes 
1 the sooner the greater the value of $a$. For $a=1$ and $a=2$, the rates are determined by the error term $N^{-a}$.

\begin{figure}[!ht]
\centering
\includegraphics[width=0.7\textwidth]{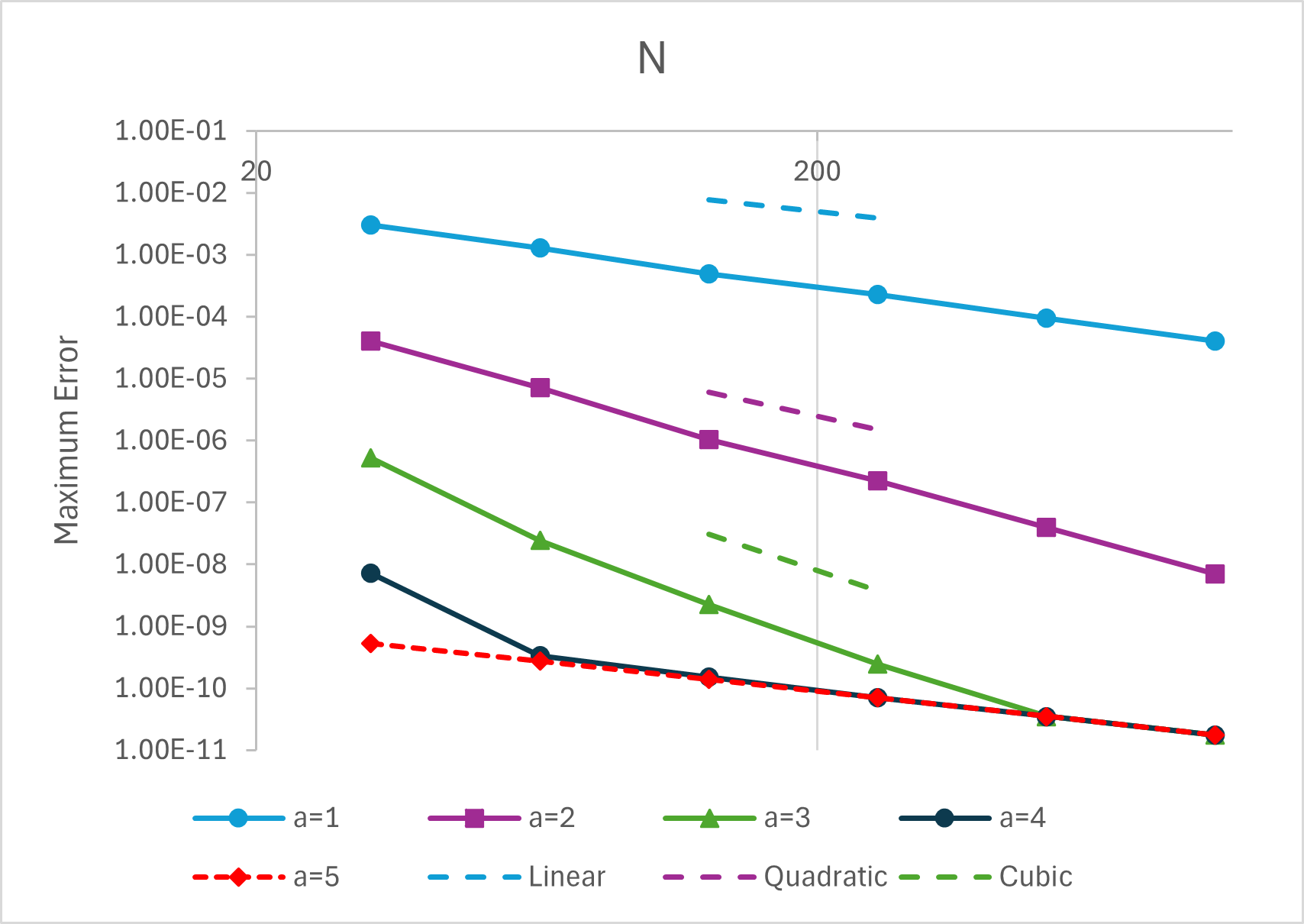}
\caption{Graphical comparison of the errors $E^N_\eps$ of the ASI scheme for $\eps=10^{-8}$, obtained when solving the problem \eqref{ex1} 
on the mesh $S_N$ with different values of $a$. The slopes of linear, quadratic, and cubic rates are included for comparison.}
\label{fig:a}
\end{figure}

We now turn to the mesh $S_{1/\eps}$. The results for the problem \eqref{ex1} are presented in Table \ref{tab:ex1_SE}, 
from which $\eps=0.1$ is omitted since the mesh is uniform and the results are the same as in Table \ref{tab:ex1_SN}. 
Although Theorem \ref{thm:main_pre} requires $a>3$, the method works well for $a=3$. We can see that, as $N$ increases, the rate of
convergence is around 2 for $\eps=10^{-2}$, but gets closer to 1 when $\eps$ is smaller. This agrees with the theoretical result in 
Theorem \ref{thm:main_pre}.
The errors also generally decrease with $\eps$. This decrease is moderate or even absent when $\eps$ changes from $10^{-2}$ to $10^{-3}$,
but as $\eps$ continues to decrease, the errors become smaller at a rate which is 1 or close to 1. This is better than what  
Theorem \ref{thm:main_pre} suggests, and even better than $\eps|\ln\eps|$ (this quantity reaches
the greatest rate of decrease when $\eps$ changes from $10^{-8}$ to $10^{-9}$, and this rate is 0.95).
Therefore, for smaller $\eps$ values, the behavior of the errors in Table \ref{tab:ex1_SE} looks like $\Oh\left(\eps N^{-1}\right)$.

\begin{table}[!ht]\small
	\centering
	\begin{tabular}{c||c|c|c|c|c|c}
		$\eps$ &  $N=2^5$   & $N=2^6$    &      $N=2^7$    & $N=2^8$    &  $N=2^9$  &$N=2^{10}$ \\
		\hline\hline
$10^{-2}$  & 2.72E-04 & 8.76E-05 & 2.73E-05 & 6.11E-06 & 1.54E-06 & 3.87E-07 \\
& & 1.64 & 1.68 & 2.16 & 1.99 & 1.99  \\ \hline
$10^{-3}$ &  4.96E-05 & 2.48E-05 & 1.17E-05 & 5.37E-06 & 1.78E-06 & 5.14E-07\\
 &  & 1.00 & 1.08 & 1.13 & 1.59 & 1.79  \\ \hline
$10^{-4}$ &  5.24E-06 & 3.24E-06 & 1.37E-06 & 6.80E-07 & 3.33E-07 & 1.60E-07\\
  &  & 0.69 & 1.24 & 1.01 & 1.03 & 1.06  \\ \hline
 $10^{-5}$ &  5.27E-07 & 3.27E-07 & 1.39E-07 & 6.98E-08 & 3.50E-08 & 1.77E-08\\
  &  &  0.69 & 1.24 & 0.99 & 1.00 & 0.98  \\ \hline
$10^{-6}$ &  5.27E-08 & 3.27E-08 & 1.39E-08 & 7.37E-09 & 3.61E-09 & 1.78E-09\\
  &  & 0.69 & 1.24 & 0.91 & 1.03 & 1.02  \\ \hline
 $10^{-7}$ & 7.20E-09 & 3.27E-09 & 1.53E-09 & 7.00E-10 & 3.52E-10 & 1.76E-10 \\
  &  &  1.14 & 1.09 & 1.13 & 0.99 & 1.00  \\ \hline
$10^{-8}$ &  5.27E-10 & 3.27E-10 & 1.53E-10 & 7.37E-11 & 3.61E-11 & 1.79E-11\\
 &  &  0.69 & 1.09 & 1.06 & 1.03 & 1.02    \\ \hline
 $10^{-9}$ & 5.27E-11 & 2.73E-11 & 1.53E-11 & 7.37E-12 & 3.52E-12 & 1.76E-12 \\
 &  & 0.95 & 0.83 & 1.06 & 1.07 & 1.00 
\end{tabular}
	\caption{The maximum errors $E^N_\eps$ and the corresponding convergence rates $r^N_\eps$ on $S_{1/\eps}$ for the problem 
\eqref{ex1}.}
 	\label{tab:ex1_SE}
\end{table}

When we compare Tables \ref{tab:ex1_SN} and \ref{tab:ex1_SE}, we can see that the rates in the latter are less intricate by far.
Also, the smaller the $\eps$ value, the lower the errors in Table \ref{tab:ex1_SE} than in Table \ref{tab:ex1_SN}. 
This can be explained by the $\eps$-factor in the error estimate on the mesh $S_{1/\eps}$, which is not present in all error terms on the mesh $S_N$.
A graphical comparison of the results in Tables \ref{tab:ex1_SN} and \ref{tab:ex1_SE} is given in Figure \ref{fig:SN-SE}.

\begin{figure}[!ht]
\centering
\includegraphics[width=0.7\textwidth]{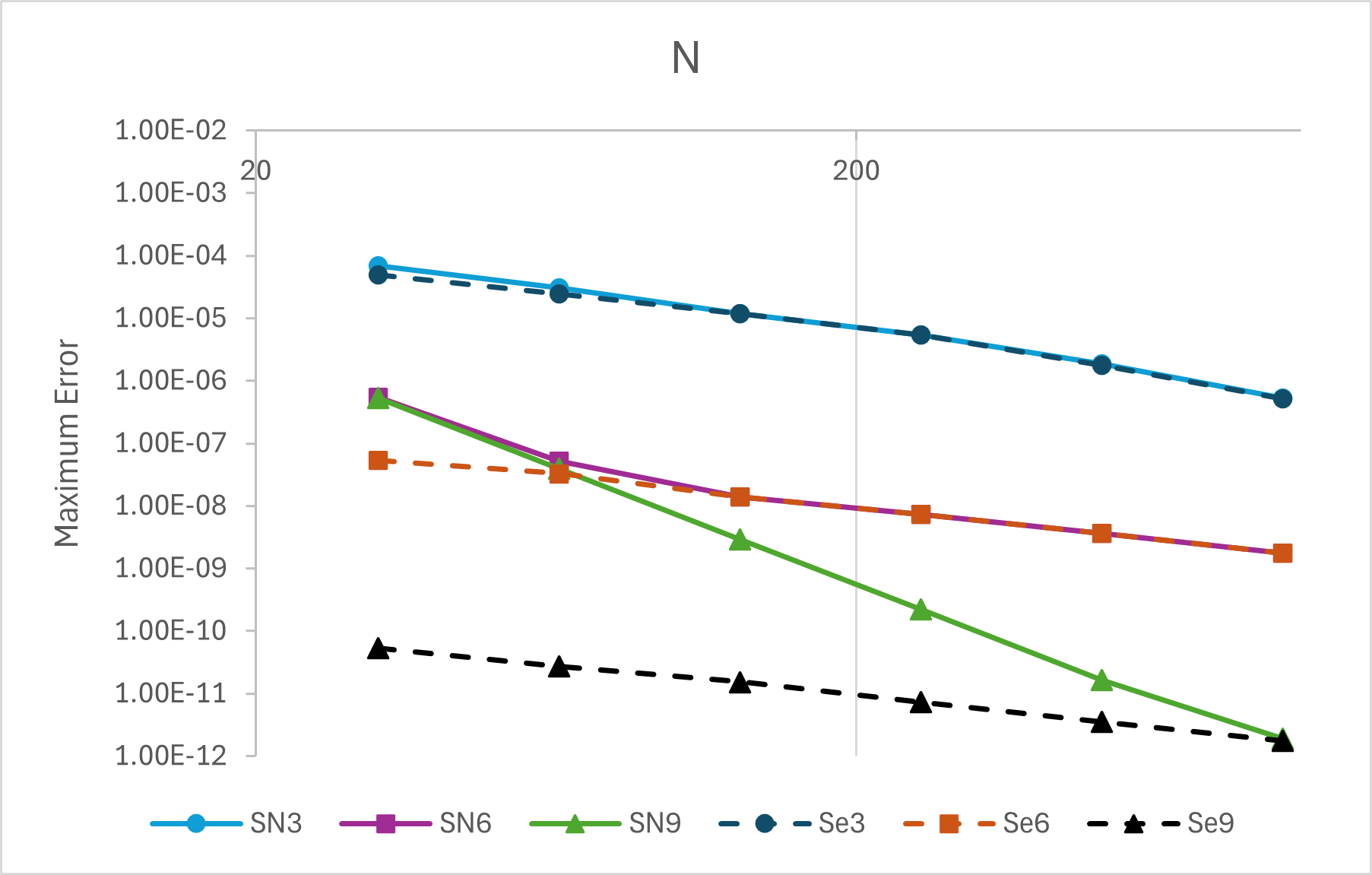}
\caption{Graphical comparison of the errors $E^N_\eps$ of the ASI scheme on $S_N$ and $S_{1/\eps}$ for
the problem \eqref{ex1}.
The labels SN$j$ and Se$j$ stand for the values of the errors on $S_N$ and, respectively, $S_{1/\eps}$ for $\eps=10^{-j}$.}
\label{fig:SN-SE}
\end{figure}

The combined use of $u_0$ and the ASI scheme is important for the high accuracy of our method on the mesh $S_{1/\eps}$ as well.
When the ASI scheme on the mesh $S_{1/\eps}$ is applied directly to the problem \eqref{ex1u}, the results are very similar to those in
Table \ref{tab:ex1_SN_comp}. As for other Samarskii-type schemes applied to the problem \eqref{ex1}, they perform worse on
$S_{1/\eps}$ than on $S_N$. Although the errors on $S_{1/\eps}$ are acceptable (the errors for $N=1024$ have magnitudes from about 
$10^{-5}$ to $10^{-4}$), they are not uniform in $\eps$ since they slightly increase as $\eps$ decreases (the errors very likely contain an 
$|\ln\eps|$-factor).

We next illustrate the situation when the reduced solution $u_0$ is calculated numerically.  We used the 4th-order Runge-Kutta method on $S_\lambda$ to solve the 
reduced problem corresponding to \eqref{ex1u} and applied the procedure described in Subsection \ref{sec:u0_numer}. As predicted in Remark \ref{rem:RKrate},
the results on $S_N$ are almost identical to those in Table \ref{tab:ex1_SN}, so there is no need to present them here. Only in a few cases of larger $N$-values is the
error somewhat greater than in Table \ref{tab:ex1_SN}. The difference is more pronounced on $S_{1/\eps}$ when both $\eps$ and $N$ are smaller. 
This is shown in Figure \ref{fig:Se-S0}, where the errors $E^N_\eps$ and
\[
\hat E^N_\eps : = \| w^N - \hat W^N\|
\]
are compared graphically. There is no essential difference between $E^N_\eps$ and $\hat E^N_\eps$ for greater values of $\eps$, but $\eps=10^{-9}$
shows that $E^N_\eps$ is less when $N$ is smaller. The term $N^{-4}$ dominates the error initially, but the fourth-rate convergence is only evident for
$N=256, 512$. Finally, when $N=512, 1024$, the term $N^{-4}$ is no longer dominant, so $E^N_\eps$ and $\hat E^N_\eps$ become identical.

\begin{figure}[!ht]
\centering
\includegraphics[width=0.7\textwidth]{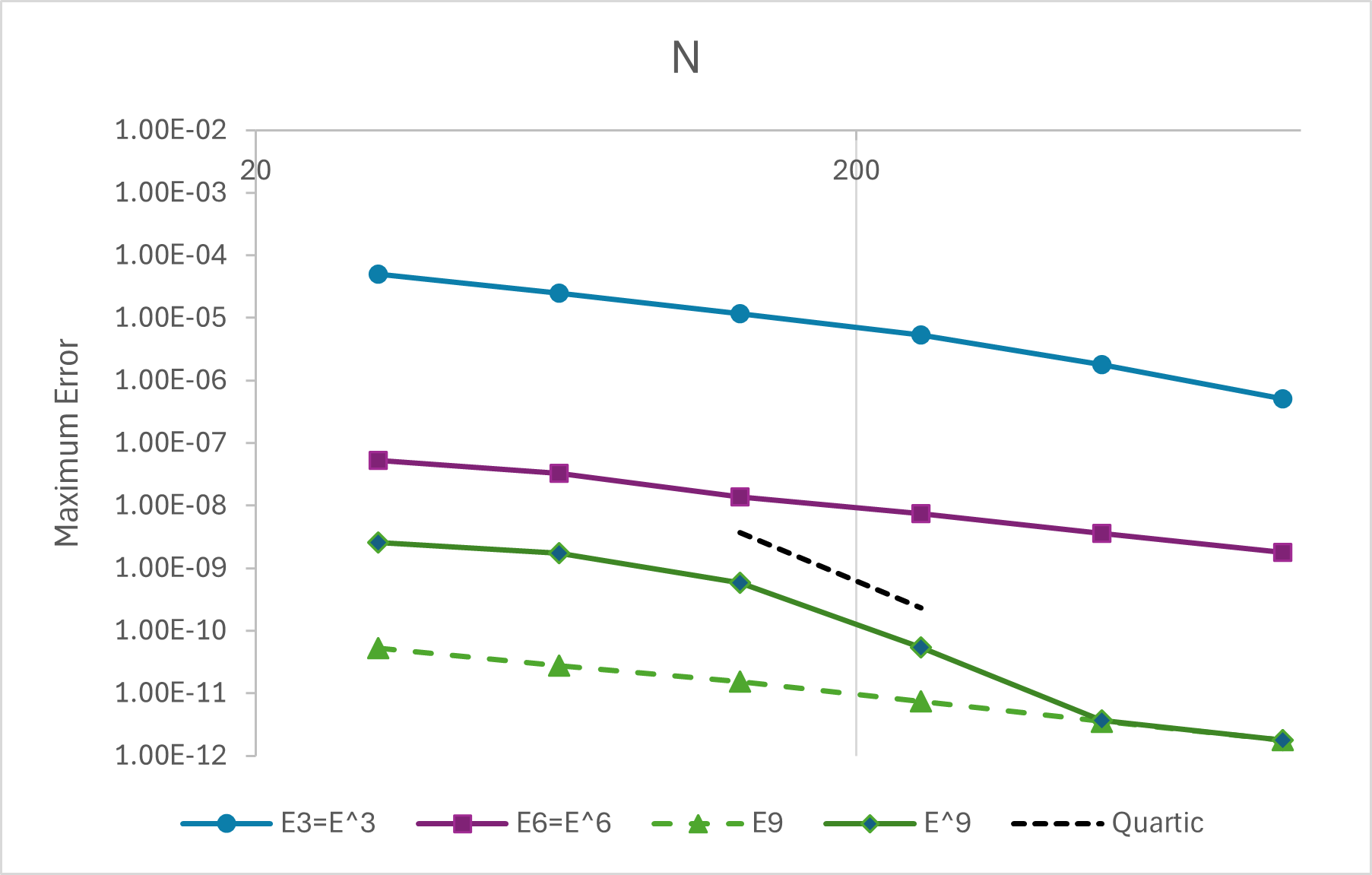}
\caption{Graphical comparison of the errors $E^N_\eps$ and $\hat E^N_\eps$ of the ASI scheme on $S_{1/\eps}$ for
the problem \eqref{ex1}.
The labels E$j$ and E{}\^{$\! j$} stand for the values of  $E^N_\eps$ and, respectively, $\hat E^N_\eps$ for $\eps=10^{-j}$.
The slope of the quartic rate is included for comparison.}
\label{fig:Se-S0}
\end{figure}

We finish our numerical experiments by illustrating Subsection \ref{sec:u0lin}. Like in \cite{VNfc}, we consider a problem 
taken from \cite{HMMR95, Lenf00},
\be
\label{ex2}
-\eps u'' -(1-x)u' + 2u = 3(1-x), \ \  x\in (0,1), \ \ u(0)=2, \ \ u(1)=0.
\ee
Its exact solution is known, $u(x)=(1-x)\left[1+e^{-x(2-x)/(2\eps)}\right]$,
and the reduced solution is linear, $u_0(x)=1-x$. In this problem, the function $b$ does not satisfy
the condition in \eqref{bc_cond}, but $b>0$ on $[0,1)$ and $u$ has an exponential layer at $x=0$. 
Tables \ref{tab:ex2_SN} and \ref{tab:ex2_SE} show the results for the ASI scheme applied directly to \eqref{ex2} on $S_N$ and $S_{1/\eps}$ 
respectively. Although $a=3$ does not meet the requirements of Theorems \ref{thm:main_SN_u0lin} and \ref{thm:main_SE_u0lin}, the results behave as these theorems
suggest. In particular, the rates of convergence are better than the corresponding ones in Tables \ref{tab:ex1_SN} and \ref{tab:ex1_SE} because the results of
Theorems \ref{thm:main_SN_u0lin} and \ref{thm:main_SE_u0lin} are better than those of Theorems \ref{thm:main_SN} and \ref{thm:main_pre} respectively.
All this is accomplished without the decomposition $u=u_0+w$.

\begin{table}\small
	\centering
	\begin{tabular}{c||c|c|c|c|c|c}
		$\eps$ &  $N=2^5$   & $N=2^6$    &      $N=2^7$    & $N=2^8$    &  $N=2^9$  &$N=2^{10}$ \\
		\hline\hline
$10^{-1}$ & 4.23E-04 & 1.06E-04 & 2.65E-05 & 6.64E-06 & 1.66E-06 & 4.15E-07\\
 &  & 2.00 & 2.00 & 2.00 & 2.00 & 2.00  \\ \hline
$10^{-2}$  & 2.05E-04 & 7.60E-05 & 2.57E-05 & 8.48E-06 & 2.69E-06 & 8.29E-07 \\
 &  & 1.43 & 1.56 & 1.60 & 1.66 & 1.70   \\ \hline
$10^{-3}$ &  1.91E-05 & 7.22E-06 & 2.46E-06 & 8.12E-07 & 2.57E-07 & 7.94E-08 \\
 &  &  1.41 & 1.56 & 1.60 & 1.66 & 1.70  \\ \hline
$10^{-4}$ &  2.28E-06 & 6.20E-07 & 2.39E-07 & 8.03E-08 & 2.56E-08 & 7.90E-09\\
  &  &  1.88 & 1.37 & 1.57 & 1.65 & 1.69   \\ \hline
 $10^{-5}$ &  1.01E-06 & 1.04E-07 & 1.73E-08 & 7.59E-09 & 2.52E-09 & 7.88E-10\\
  &  & 3.28 & 2.59 & 1.18 & 1.59 & 1.68    \\ \hline
$10^{-6}$ &  2.27E-06 & 1.69E-07 & 1.25E-08 & 7.92E-10 & 2.17E-10 & 7.62E-11 \\
  &  &  3.75 & 3.75 & 3.98 & 1.87 & 1.51   \\ \hline
 $10^{-7}$ &  1.01E-06 & 1.69E-07 & 9.43E-09 & 9.31E-10 & 6.92E-11 & 5.58E-12\\
  &  & 2.58 & 4.16 & 3.34 & 3.75 & 3.63    \\ \hline
$10^{-8}$ & 2.27E-06 & 1.04E-07 & 9.43E-09 & 9.31E-10 & 6.93E-11 & 5.15E-12\\
 &  &  4.45 & 3.46 & 3.34 & 3.75 & 3.75   \\ \hline
 $10^{-9}$ &  2.27E-06 & 1.69E-07 & 1.25E-08 & 9.31E-10 & 6.32E-11 & 4.89E-12\\
  &  &   3.75 & 3.75 & 3.75 & 3.88 & 3.69 
\end{tabular}
	\caption{The maximum errors $E^N_\eps$ and the corresponding convergence rates $r^N_\eps$ on $S_N$ for the problem \eqref{ex2}.}
 	\label{tab:ex2_SN}
\end{table}

\begin{table}\small
	\centering
	\begin{tabular}{c||c|c|c|c|c|c}
		$\eps$ &  $N=2^5$   & $N=2^6$    &      $N=2^7$    & $N=2^8$    &  $N=2^9$  &$N=2^{10}$ \\
		\hline\hline
$10^{-2}$  & 3.66E-04 & 9.29E-05 & 2.33E-05 & 5.84E-06 & 1.46E-06 & 3.66E-07\\
 &  & 1.98 & 1.99 & 2.00 & 2.00 & 2.00    \\ \hline
$10^{-3}$ &  6.75E-05 & 1.96E-05 & 4.93E-06 & 1.26E-06 & 3.15E-07 & 7.89E-08\\
 &  &  1.79 & 1.99 & 1.97 & 2.00 & 2.00    \\ \hline
$10^{-4}$ &  9.01E-06 & 3.50E-06 & 8.86E-07 & 2.22E-07 & 5.57E-08 & 1.40E-08\\
  &  &   1.37 & 1.98 & 1.99 & 2.00 & 2.00  \\ \hline
 $10^{-5}$ &  9.88E-07 & 5.16E-07 & 1.32E-07 & 3.48E-08 & 8.71E-09 & 2.18E-09\\
  &  &  0.94 & 1.97 & 1.92 & 2.00 & 2.00    \\ \hline
$10^{-6}$ &  9.55E-08 & 6.72E-08 & 1.95E-08 & 4.91E-09 & 1.25E-09 & 3.14E-10\\
  &  &  0.51 & 1.79 & 1.99 & 1.97 & 2.00   \\ \hline
 $10^{-7}$ & 8.45E-09 & 8.03E-09 & 2.69E-09 & 6.80E-10 & 1.70E-10 & 4.24E-11\\
  &  &  0.07 & 1.58 & 1.98 & 2.00 & 2.01  \\ \hline
$10^{-8}$ & 7.01E-10 & 9.00E-10 & 3.49E-10 & 8.86E-11 & 2.23E-11 & 5.76E-12\\
 &  &  * & 1.37 & 1.98 & 1.99 & 1.95   \\ \hline
 $10^{-9}$ & 5.54E-11 & 9.62E-11 & 4.32E-11 & 1.10E-11 & 2.86E-12 & 5.74E-13 \\
  &  &  * & 1.15 & 1.97 & 1.95 & 2.31 
\end{tabular}
	\caption{The maximum errors $E^N_\eps$ and the corresponding convergence rates $r^N_\eps$ on $S_{1/\eps}$ for the problem 
\eqref{ex2}. ($^*$Convergence cannot be observed.)}
 	\label{tab:ex2_SE}
\end{table}

\section{Conclusion}
\label{sec:concl}

When the reduced solution $u_0$ is found exactly or approximately with sufficient accuracy, the decomposition 
$u=u_0+w$ can be used to transform the singularly perturbed convection-diffusion problem \eqref{1DCD} so that $w$ 
is the new unknown function. Since there is one less component to calculate, this can improve the results of any 
discretization scheme, but the order of accuracy generally remains unchanged. However, for the Samarskii-type 
schemes on piecewise uniform meshes of Shishkin type, the order of accuracy increases from 1 to almost 2, \cite{VNfc}. 
For one of those schemes, viz., the exponentially fitted Allen-Southwell-Il'in (ASI) scheme, the improvement is even 
more significant. This is established in the present paper both theoretically and numerically on the original Shishkin 
mesh and its modification that replaces the standard transition point of the form $\Oh(\eps\ln N)$ with
$\Oh(\eps |\ln \eps|)$. In some cases, the errors not only diminish when the number of mesh steps $N$ increases, but 
also when $\eps$ decreases. The decomposition is not needed when $u_0$ is linear.

Although the ASI scheme was introduced a long time ago, it has not been analyzed on layer-adapted nonuniform 
meshes, except as a member of a general class of Samarskii-type or similar schemes, \cite{Linss02, VNfc}. We 
fill this gap here, at least for the Shishkin-type meshes. 

\end{document}